\newtheorem{theorem}{Theorem}
\theoremstyle{plain}
\newtheorem{corollary}{Corollary}
\newtheorem{definition}{Definition}
\newtheorem{lemma}{Lemma}
\newtheorem{proposition}{Proposition}
\newtheorem*{remark}{Remarks}
\numberwithin{equation}{section}
\begin{document}
\title[ Fixed points of RSC mappings]{Weak and strong convergence theorems
for generalized nonexpansive mappings}
\author{Safeer Hussain Khan}
\address{Safeer Hussain Khan, Department of Mathematics, Statistics and
Physics, Qatar University, Doha 2713, State of Qatar.}
\email{safeerhussain5@yahoo.com; safeer@qu.edu.qa}
\author{Ibrahim Karahan}
\address{Department of Mathematics, Faculty of Science, Erzurum Technical
University, Erzurum, 25700, Turkey.}
\email{ibrahimkarahan@erzurum.edu.tr}
\date{%
\today%
}
\subjclass[2000]{49H09, 47H10}
\keywords{Generalized nonexpansive mappings, fixed point, convergence,
Kadec--Klee property, Condition (I). }

\begin{abstract}
We consider a class of generalized nonexpansive mappings introduced by
Karapinar \cite{karapinar} and seen as a generalization of Suzuki
(C)-condition. We prove some weak and strong convergence theorems for
approximating fixed points of such mappings under suitable conditions in
uniformly convex Banach spaces. Our results generalize those of Khan and
Suzuki \cite{khansuz} to the case of this kind of mappings and, in turn, are
related to a famous convergence theorem of Reich \cite{reich} on
nonexpansive mappings.
\end{abstract}

\maketitle

\section{Introduction}

Let $E$ be a Banach space and let $K$ be a nonempty subset of $E$. A mapping 
$T$ on $K$ is called nonexpansive if $\left\Vert Tx-Ty\right\Vert \leq
\left\Vert x-y\right\Vert $ for all $x,y\in K$. The set of fixed points of $%
T $ is denoted by $F\left( T\right) ,$ i.e., $F\left( T\right) =\left\{ x\in
K:Tx=x\right\} .$ It is well known that if $E$ is uniformly convex and $K$
is a bounded, closed and convex subset of $E$, then $F\left( T\right) $ is
nonempty. Recall that a mapping $T:K\rightarrow K$ is called
quasi-nonexpansive \cite{diaz} if $\left\Vert Tx-p\right\Vert \leq
\left\Vert x-p\right\Vert $ for all $x\in K$ and $p\in F\left( T\right) $.
It is easy to see that every nonexpansive mappings with a fixed point is
quasi-nonexpansive mapping.

In 2008, Suzuki \cite{suzuki} introduced the concept of generalized
nonexpansive mappings (Condition (C)). This concept is weaker than
nonexpansiveness and stronger than quasi-nonexpansiveness.

\textbf{Condition (C) }For a mapping $T$ defined from a subset $K$ of a
Banach space $E$ into itself, $T$ is said to satisfy the condition (C) if%
\begin{equation*}
\frac{1}{2}\left\Vert x-Tx\right\Vert \leq \left\Vert x-y\right\Vert \text{
implies }\left\Vert Tx-Ty\right\Vert \leq \left\Vert x-y\right\Vert
\end{equation*}%
for all $x,y\in K.$

Suzuki \cite{suzuki} proved the following theorems for mappings satisfying
this condition:

\begin{theorem}
\label{SUZ1}\cite{suzuki} Let $T$ be a mapping on a compact convex subset $K$
of a Banach space $E$. Assume that $T$ satisfies condition (C). Define a
sequence $\left\{ x_{n}\right\} $ in $K$ by $x_{1}\in K$ and%
\begin{equation*}
x_{n+1}=\alpha Tx_{n}+\left( 1-\alpha \right) x_{n},
\end{equation*}%
where $\alpha $ is a real number belonging to $\left[ 1/2,1\right) $. Then $%
\left\{ x_{n}\right\} $ converges strongly to a fixed point of $T$.
\end{theorem}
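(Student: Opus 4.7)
The plan follows the classical three-stage template for Krasnoselskii--Mann iterates: establish asymptotic regularity $\|x_n - Tx_n\| \to 0$, extract a subsequential fixed-point limit by compactness, and upgrade to strong convergence of the whole sequence via a monotonicity argument.

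First I would derive the key single-step estimate from Condition (C). Since $x_{n+1} - x_n = \alpha(Tx_n - x_n)$ and $\alpha \in [1/2,1)$, the inequality $\tfrac{1}{2}\|x_n - Tx_n\| \leq \alpha\|x_n - Tx_n\| = \|x_n - x_{n+1}\|$ holds automatically, so Condition (C) applied with $y = x_{n+1}$ gives
\[
\|Tx_n - Tx_{n+1}\| \leq \|x_n - x_{n+1}\| = \alpha\|x_n - Tx_n\|.
\]
Combined with the identity $\|x_{n+1} - Tx_n\| = (1-\alpha)\|x_n - Tx_n\|$ and the triangle inequality, this yields $\|x_{n+1} - Tx_{n+1}\| \leq \|x_n - Tx_n\|$, so $\{\|x_n - Tx_n\|\}$ is non-increasing with some limit $c \geq 0$. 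The technical heart of the argument is showing $c = 0$. I would argue by contradiction: if $c > 0$, compactness of $K$ yields simultaneously convergent subsequences $x_{n_k} \to z$ and $Tx_{n_k} \to w$ with $\|z - w\| = c$, and passing the iteration relation together with cross-indexed applications of Condition (C) to the limit would force a contradiction from the rigid geometric constraint along the orbit.

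Once $\|x_n - Tx_n\| \to 0$ is established, identification of a fixed point is quick. Compactness provides $x_{n_k} \to z \in K$ along a further subsequence. I would then invoke the standard ``near-nonexpansiveness'' consequence of Condition (C), namely
\[
\|x - Ty\| \leq 3\|Tx - x\| + \|x - y\| \quad \text{for all } x,y \in K
\]
(a basic lemma from \cite{suzuki}). Specializing $x = x_{n_k}$ and $y = z$ and letting $k \to \infty$ shows $x_{n_k} \to Tz$, so $Tz = z$. For the final stage, Condition (C) with $x = z$ a fixed point and $y$ arbitrary gives $\|Ty - z\| \leq \|y - z\|$, i.e. quasi-nonexpansiveness at $z$; the convexity estimate $\|x_{n+1} - z\| \leq \alpha\|Tx_n - z\| + (1-\alpha)\|x_n - z\| \leq \|x_n - z\|$ then shows that $\{\|x_n - z\|\}$ is non-increasing. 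Since a subsequence tends to $0$, the whole sequence tends to $0$, so $x_n \to z$.

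The main obstacle is the $c = 0$ step inside the asymptotic regularity argument. Because $E$ is only a general Banach space, uniform convexity is unavailable, and $T$ is not assumed continuous, so one cannot pass $T$ through limits directly. Leveraging compactness of $K$ together with Condition (C) across different subsequence indices is the delicate ingredient; everything else is routine bookkeeping once that is in place.
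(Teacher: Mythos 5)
This theorem is quoted in the paper from Suzuki's 2008 article without proof, so the comparison must be with the known argument rather than with anything in this manuscript. Your stages 1, 3 and 4 are correct and match that argument: the inequality $\tfrac12\Vert x_n-Tx_n\Vert\le\alpha\Vert x_n-Tx_n\Vert=\Vert x_n-x_{n+1}\Vert$ legitimately triggers Condition (C) with $y=x_{n+1}$, giving both $\Vert Tx_{n+1}-Tx_n\Vert\le\Vert x_{n+1}-x_n\Vert$ and the monotonicity of $\Vert x_n-Tx_n\Vert$; the identification of the subsequential limit via $\Vert x-Ty\Vert\le 3\Vert Tx-x\Vert+\Vert x-y\Vert$ is exactly Suzuki's route; and quasi-nonexpansiveness at a fixed point upgrades subsequential to full strong convergence.

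The genuine gap is the step you yourself flag as the technical heart: proving $c=\lim_n\Vert x_n-Tx_n\Vert=0$. Your sketch --- extract $x_{n_k}\to z$, $Tx_{n_k}\to w$ with $\Vert z-w\Vert=c$ and derive a contradiction from ``cross-indexed applications of Condition (C)'' --- does not go through as stated: $T$ is not continuous, so nothing can be said about $Tz$ or $Tw$, and Condition (C) applied between different indices only reproduces inequalities you already have; no contradiction materializes from the subsequential limits alone. The actual ingredient is Ishikawa's lemma (see also Goebel--Kirk): if $\{z_n\},\{w_n\}$ are bounded, $z_{n+1}=\lambda w_n+(1-\lambda)z_n$ with $\lambda\in(0,1)$, and $\Vert w_{n+1}-w_n\Vert\le\Vert z_{n+1}-z_n\Vert$ for all $n$, then $\Vert w_n-z_n\Vert\to 0$. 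Its proof rests on the inductively established inequality
\begin{equation*}
\left(1+m\lambda\right)\left\Vert w_i-z_i\right\Vert \le \left\Vert w_{i+m}-z_i\right\Vert+\left(1-\lambda\right)^{-m}\left(\left\Vert w_i-z_i\right\Vert-\left\Vert w_{i+m}-z_{i+m}\right\Vert\right),
\end{equation*}
which, combined with boundedness and the convergence of $\Vert w_n-z_n\Vert$, forces the limit to be zero; this uses only boundedness of $K$, not compactness. You have already verified its hypothesis $\Vert Tx_{n+1}-Tx_n\Vert\le\Vert x_{n+1}-x_n\Vert$, so citing or reproving this lemma closes the gap; without it the proof is incomplete at its central step.
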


\begin{theorem}
\label{SUZ2}\cite{suzuki} Let $T$ be a mapping on a weakly compact convex
subset $K$ of a Banach space $E$ with the Opial property. Assume that $T$
satisfies condition (C). Define a sequence $\left\{ x_{n}\right\} $ in $K$
by $x_{1}\in K$ and%
\begin{equation*}
x_{n+1}=\alpha Tx_{n}+\left( 1-\alpha \right) x_{n},
\end{equation*}%
where $\alpha $ is a real number belonging to $\left[ 1/2,1\right) $. Then $%
\left\{ x_{n}\right\} $ converges weakly to a fixed point of $T$.
\end{theorem}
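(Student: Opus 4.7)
The plan is to follow the standard Opial-type weak convergence scheme adapted to Condition~(C). First I would invoke Suzuki's fixed-point existence result: a mapping satisfying Condition~(C) on a weakly compact convex subset of a Banach space with the Opial property has $F(T) \neq \emptyset$, so there is at least one candidate limit to target.

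Next I would establish that $\|x_n - p\|$ is nonincreasing (hence convergent) for every $p \in F(T)$. The point is that Condition~(C) forces quasi-nonexpansiveness: substituting $x := p$ in the implication defining~(C), the hypothesis $\tfrac12\|p - Tp\| = 0 \le \|p - y\|$ holds automatically, so $\|Tp - Ty\| = \|p - Ty\| \le \|p - y\|$ for every $y \in K$. Convexity of the norm then gives
\[
\|x_{n+1} - p\| \le \alpha\|Tx_n - p\| + (1-\alpha)\|x_n - p\| \le \|x_n - p\|,
\]
so $\lim_n \|x_n - p\|$ exists for each $p \in F(T)$ and $\{x_n\}$ is bounded.

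The main technical task is to prove $\lim_n \|x_n - Tx_n\| = 0$. Here the choice $\alpha \ge 1/2$ is crucial: since $\|x_{n+1} - x_n\| = \alpha \|Tx_n - x_n\|$, we have $\tfrac12\|x_n - Tx_n\| \le \|x_n - x_{n+1}\|$, so Condition~(C) applied to the pair $(x_n, x_{n+1})$ yields $\|Tx_n - Tx_{n+1}\| \le \|x_n - x_{n+1}\|$. Combined with the iteration formula and the existence of $\lim_n \|x_n - p\|$ from the previous paragraph, a telescoping estimate on $\|x_n - Tx_n\|$ will drive this quantity to zero. I expect this to be the main obstacle: the argument avoids uniform convexity and relies delicately on the lower bound $\alpha \ge 1/2$; I would pattern it on Suzuki's asymptotic regularity lemma for (C)-mappings.

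To conclude, weak compactness of $K$ provides a subsequence $x_{n_k} \rightharpoonup z \in K$, and demiclosedness of $I - T$ at $0$ for Condition~(C) mappings in an Opial space (another result of Suzuki) combined with $\|x_n - Tx_n\| \to 0$ gives $z \in F(T)$. For uniqueness of the weak cluster point, suppose another subsequence satisfies $x_{m_j} \rightharpoonup w \in F(T)$ with $w \ne z$. The limits $\lim_n \|x_n - z\|$ and $\lim_n \|x_n - w\|$ exist, and Opial's property applied along each subsequence yields
\[
\lim_n \|x_n - z\| < \lim_n \|x_n - w\| < \lim_n \|x_n - z\|,
\]
a contradiction. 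Hence $x_n \rightharpoonup z \in F(T)$, completing the proof.
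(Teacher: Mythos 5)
This theorem is quoted by the paper from Suzuki's article without proof, so there is no in-paper argument to compare against; your outline follows the standard route, which is essentially Suzuki's original one. Most of the steps are sound: quasi-nonexpansiveness of $T$ at fixed points (by putting $x:=p$ in Condition (C)), the resulting monotonicity of $\Vert x_n-p\Vert$, demiclosedness of $I-T$ at zero via the inequality $\Vert x-Ty\Vert \le 3\Vert x-Tx\Vert +\Vert x-y\Vert$ combined with the Opial property, and the two-subsequence Opial argument for uniqueness of the weak cluster point are all correct.

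The one genuine gap is the asymptotic regularity step, precisely the point you flag as the main obstacle. What your sketch actually delivers is only monotonicity: from $\tfrac{1}{2}\Vert x_n-Tx_n\Vert \le \Vert x_n-x_{n+1}\Vert$ and Condition (C) you get $\Vert Tx_{n+1}-Tx_n\Vert \le \Vert x_{n+1}-x_n\Vert$, and writing $Tx_{n+1}-x_{n+1}=(Tx_{n+1}-Tx_n)+(1-\alpha )(Tx_n-x_n)$ gives $\Vert Tx_{n+1}-x_{n+1}\Vert \le \Vert Tx_n-x_n\Vert$. A telescoping estimate from here only shows that $\Vert x_n-Tx_n\Vert$ decreases to some $r\ge 0$, not that $r=0$. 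The missing ingredient is the Ishikawa--Goebel--Kirk inequality
\[
\Vert Tx_{k+n}-x_k\Vert \;\ge\; (1-\alpha )^{-n}\bigl(\Vert Tx_k-x_k\Vert -\Vert Tx_{k+n}-x_{k+n}\Vert \bigr)+(1+n\alpha )\,\Vert Tx_{k+n}-x_{k+n}\Vert ,
\]
valid whenever $\Vert Tx_{n+1}-Tx_n\Vert \le \Vert x_{n+1}-x_n\Vert$ holds along the iteration: the left side is bounded by $\mathrm{diam}(K)$, and fixing $n$ and letting $k\rightarrow \infty$ the first term on the right vanishes, so $(1+n\alpha )r\le \mathrm{diam}(K)$ for every $n$, forcing $r=0$. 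This is the actual content of Suzuki's asymptotic regularity lemma; without it the proof does not close. A secondary, easily repaired point: invoking the fixed-point existence theorem at the outset is circular in Suzuki's development, where existence is deduced from this very convergence theorem (as the present paper itself remarks). You do not need it: boundedness of $\{x_n\}$ follows from weak compactness of $K$, and the demiclosedness step already produces a fixed point, after which the Fej\'{e}r monotonicity and the Opial uniqueness argument apply.
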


It follows from Theorem \ref{SUZ1} and Theorem \ref{SUZ2} that for the
mapping $T$ satisfying the condition (C) defined on a convex subset $C$ of a
Banach space $E$, if either $C$ is compact or $C$ is a weakly compact and $E$
has the Opial property, then $T$ has a fixed point. (see \cite[Theorem 4]%
{suzuki})

In 2013, Khan and Suzuki \cite{khansuz} gave the following weak convergence
theorem for the mappings satisfying condition (C) defined on a bounded
closed convex subset $K$ of a uniformly convex Banach space $E$:

\begin{theorem}
\cite{khansuz} Let $E$ be a uniformly convex Banach space whose dual has the
Kadec--Klee property. Let $T$ be a mapping on a bounded, closed and convex
subset $K$ of $E$. Assume that $T$ satisfies condition (C). Define a
sequence $\left\{ x_{n}\right\} $ in $K$ by $x_{1}\in K$ and%
\begin{equation*}
x_{n+1}=\alpha Tx_{n}+\left( 1-\alpha \right) x_{n},
\end{equation*}%
where $\alpha $ is a real number belonging to $\left[ 1/2,1\right) $. Then $%
\left\{ x_{n}\right\} $ converges weakly to a fixed point of $T$ .
\end{theorem}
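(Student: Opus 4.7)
The plan is to follow the classical four-step scheme for weak convergence of Krasnoselski--Mann-type iterates: (i) establish that $F(T)$ is nonempty and that $\{\|x_n - p\|\}$ converges for every $p \in F(T)$; (ii) prove asymptotic regularity $\|x_n - Tx_n\| \to 0$; (iii) show that every weak subsequential limit of $\{x_n\}$ lies in $F(T)$; (iv) use the Kadec--Klee property of $E^*$ to upgrade the existence of a weak cluster point in $F(T)$ to weak convergence of the full sequence.

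For step (i), since $E$ is uniformly convex it is reflexive, so the bounded closed convex set $K$ is weakly compact, and Suzuki's existence result (a consequence of Theorem \ref{SUZ2} together with the uniform convexity hypothesis) gives $F(T) \neq \emptyset$. The mapping $T$ is automatically quasi-nonexpansive: if $p = Tp$ then $\tfrac12 \|p - Tp\| = 0 \le \|p - x\|$, so condition (C) forces $\|Tx - p\| \le \|x - p\|$. Combining this with the iteration formula gives $\|x_{n+1} - p\| \le \alpha \|Tx_n - p\| + (1-\alpha)\|x_n - p\| \le \|x_n - p\|$, so $\{\|x_n - p\|\}$ is nonincreasing and thus convergent to some $r = r(p) \ge 0$.

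For step (ii), I would apply the standard lemma of Schu in uniformly convex Banach spaces: if $\|Tx_n - p\| \le \|x_n - p\|$, $\|x_n - p\| \to r$, and $\|\alpha(Tx_n - p) + (1-\alpha)(x_n - p)\| = \|x_{n+1} - p\| \to r$ with $\alpha \in [1/2, 1)$, then $\|Tx_n - x_n\| \to 0$. For step (iii), pick any subsequence $\{x_{n_k}\}$ with $x_{n_k} \rightharpoonup q$ (available because $K$ is weakly compact); then Suzuki's demiclosedness principle for condition (C) mappings in uniformly convex spaces (proved in \cite{suzuki}) combined with $\|x_{n_k} - Tx_{n_k}\| \to 0$ yields $Tq = q$, so $q \in F(T)$.

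The decisive and most delicate step is (iv). Here I would argue by contradiction: suppose two subsequences converge weakly to distinct fixed points $p, q \in F(T)$. Both $\lim_n \|x_n - p\|$ and $\lim_n \|x_n - q\|$ exist by step (i). One then invokes the Kadec--Klee property of $E^*$ (equivalently, property (H) in the dual) through Reich's lemma: for a bounded sequence in a reflexive Banach space whose dual has the Kadec--Klee property, if two weak subsequential limits $p, q$ both satisfy the property that $\{\|x_n - z\|\}$ converges for $z = p, q$, and if for each such limit $\lim_n \langle x_n - z, f \rangle = 0$ for every $f$ in the duality set at $p - q$, then $p = q$. Implementing this precisely (via the duality map and the uniform convexity of $E$) is where the main technical work lies, and it is the principal obstacle. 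Once uniqueness of the weak cluster point is established, the fact that $\{x_n\}$ is bounded in a reflexive space forces weak convergence of the entire sequence to this common fixed point.
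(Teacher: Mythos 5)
Your steps (i) and (ii) are fine (for the fixed-point existence you should cite Suzuki's result for weakly compact convex sets in UCED spaces rather than Theorem \ref{SUZ2}, since a uniformly convex space need not have the Opial property -- think of $L^{p}$, $p\neq 2$ -- but this is easily repaired, and asymptotic regularity also follows directly from Suzuki's own lemma for this iteration). The problems are in steps (iii) and (iv). For (iii), the demiclosedness of $I-T$ at zero for condition (C) mappings is \emph{not} available from \cite{suzuki} in this setting: Suzuki proves it only under Opial-type hypotheses, which are absent here. In a merely uniformly convex space this demiclosedness is itself a nontrivial result of \cite{khansuz}, obtained through a quantitative convex-combination lemma (the analogues in the present paper are Lemma \ref{1} and Proposition \ref{2}, resting on Proposition \ref{K} and Lemma \ref{KS}); a blind proof has to supply this argument, not cite it away.

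The more serious gap is step (iv), which is the only place the Kadec--Klee property of $E^{*}$ enters and is the heart of the theorem. Knowing only that $\lim_{n}\Vert x_{n}-p\Vert$ and $\lim_{n}\Vert x_{n}-q\Vert$ exist for two weak subsequential limits $p,q\in F(T)$ does not force $p=q$ in this generality; the "Reich's lemma" you invoke, with the hypothesis that $\lim_{n}\langle x_{n}-z,f\rangle=0$ for $f$ in the duality set at $p-q$, is not a standard tool and its hypothesis is essentially the conclusion you are trying to reach. The correct mechanism (used in \cite{khansuz} and mirrored in this paper) is the Falset--Kaczor--Kuczumow--Reich/Kaczor criterion (Lemma \ref{FK}): uniqueness of the weak subsequential limit follows once one knows that $\lim_{n}\Vert tx_{n}+(1-t)p-q\Vert$ exists for \emph{every} $t\in[0,1]$ and all $p,q\in F(T)$. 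Establishing the existence of this limit is the central technical lemma (Lemma \ref{3} here, the condition (C) version being Lemma 10 of \cite{khansuz}); its proof requires tracking the iterates $S^{\ell}(tx_{m}+(1-t)p)$ of the averaged map $S=\alpha T+(1-\alpha)I$, estimating them via the condition (C) inequality and Proposition \ref{K}, and then applying Lemma \ref{KS} to suitably chosen subsequences realizing the liminf and limsup. You neither identify this lemma nor indicate how to prove it, and you acknowledge the step as "the principal obstacle"; as it stands the proposal therefore omits the decisive part of the argument.
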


In 2013, Karapinar \cite{karapinar} introduced a new class of mappings as a
modification of mappings satisfying the condition (C) of Suzuki \cite{suzuki}%
.

\begin{definition}
\cite{karapinar} Let $T$ be a mapping on a subset $K$ of a Banach space $E$.
Then $T$ is said to satisfy Reich-Suzuki-(C) condition (in short,
(RSC)-condition) if%
\begin{equation*}
\frac{1}{2}\left\Vert x-Tx\right\Vert \leq \left\Vert x-y\right\Vert
\end{equation*}%
implies that%
\begin{equation*}
\left\Vert Tx-Ty\right\Vert \leq \frac{1}{3}\left( \left\Vert x-y\right\Vert
+\left\Vert y-Ty\right\Vert +\left\Vert x-Tx\right\Vert \right)
\end{equation*}%
for all $x,y\in K.$
\end{definition}

Also, Karapinar gave some properties of this kind of mappings and proved
some weak and strong convergence theorems for the mappings satisfying the
(RSC)-condition in Banach spaces. We will need the following which is
Proposition 1 of Karapinar \cite{karapinar}.

\begin{proposition}
\label{krap6} If a mapping $T$ satisfies (RSC)-condition and has a fixed
point, then it is quasi-nonexpansive mapping.
\end{proposition}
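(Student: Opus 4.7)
The plan is to apply the (RSC)-condition directly with the fixed point playing the role of $x$. Let $p\in F(T)$ and let $y\in K$ be arbitrary. Since $Tp=p$ we have $\tfrac{1}{2}\|p-Tp\|=0\le \|p-y\|$, so the premise of the (RSC)-condition is trivially satisfied with the choice $x:=p$, $y:=y$. The conclusion then yields
\[
\|Tp-Ty\|\le \frac{1}{3}\bigl(\|p-y\|+\|y-Ty\|+\|p-Tp\|\bigr)=\frac{1}{3}\bigl(\|p-y\|+\|y-Ty\|\bigr),
\]
that is, $\|p-Ty\|\le \tfrac{1}{3}(\|p-y\|+\|y-Ty\|)$.

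Next I would eliminate the unwanted term $\|y-Ty\|$ via the triangle inequality, writing $\|y-Ty\|\le \|y-p\|+\|p-Ty\|$. Substituting gives
\[
\|p-Ty\|\le \frac{1}{3}\bigl(2\|p-y\|+\|p-Ty\|\bigr),
\]
and moving $\tfrac{1}{3}\|p-Ty\|$ to the left-hand side produces $\tfrac{2}{3}\|p-Ty\|\le \tfrac{2}{3}\|p-y\|$, i.e., $\|Ty-p\|\le \|y-p\|$. Since $y\in K$ was arbitrary, $T$ is quasi-nonexpansive.

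There is no real obstacle here; the only choice to be made is which variable to substitute $p$ into, and using $x=p$ is forced because we need the premise $\tfrac{1}{2}\|x-Tx\|\le \|x-y\|$ to hold automatically. The rest is just the triangle inequality and a one-line algebraic rearrangement, made possible by the fact that the coefficient $\tfrac{1}{3}$ in front of $\|p-Ty\|$ after substitution is strictly less than $1$.
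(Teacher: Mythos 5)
Your proof is correct and complete: choosing $x=p$ makes the premise of the (RSC)-condition hold trivially, and the triangle-inequality absorption of $\|y-Ty\|$ is exactly the right move, since the coefficient $\tfrac{1}{3}$ lets you solve for $\|Ty-p\|$. The paper itself does not reprove this statement (it quotes Proposition 1 of Karapinar), and your argument is the same standard one given in that cited source, so there is nothing to add.
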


In this paper, we prove some weak and strong convergence theorems in
uniformly convex Banach spaces. For weak convergence theorem, we assume that
dual of uniformly convex Banach space has the Kadec-Klee property.

\section{\protect\bigskip Preliminaries}

Throughout this paper, we assume that all Banach spaces are real and denote
by $%
\mathbb{N}
$ the set of all positive integers unless stated otherwise. In this section,
we give some definitions, propositions and lemmas to use in the next section.

\begin{definition}
\cite{clark} Let $E$ be a Banach space. $E$ is called uniformly convex if
for each $\varepsilon >0$, there exists $\delta >0$ such that $\left\Vert
x+y\right\Vert \leq 2-\delta $ for all $x,y\in E$ with $\left\Vert
x\right\Vert =\left\Vert y\right\Vert =1$ and $\left\Vert x-y\right\Vert
\geq \varepsilon $.
\end{definition}

Uniformly convex spaces are common examples of reflexive Banach spaces. The
concept of uniform convexity was first introduced by Clarkson \cite{clark}
in 1936. In respect of these spaces, the following lemma is well known.

\begin{lemma}
\label{KIRK}\cite{kirk} Let $E$ be a uniformly convex Banach space. Let $%
\{x_{n}\}$ and $\{y_{n}\}$ be sequences in $E$ satisfying $%
\lim_{n\rightarrow \infty }\left\Vert x_{n}\right\Vert =1$, $%
\lim_{n\rightarrow \infty }\left\Vert y_{n}\right\Vert =1$ and $\
\lim_{n\rightarrow \infty }\left\Vert x_{n}+y_{n}\right\Vert =2.$ Then $%
\lim_{n\rightarrow \infty }\left\Vert x_{n}-y_{n}\right\Vert =0.$
\end{lemma}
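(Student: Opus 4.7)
The plan is to argue by contradiction, reducing to the unit sphere where the definition of uniform convexity directly applies. First I would suppose $\|x_n - y_n\| \not\to 0$, so that after passing to a subsequence (which I will still index by $n$) there is $\varepsilon > 0$ with $\|x_n - y_n\| \geq \varepsilon$ for all $n$. The goal is then to manufacture a contradiction with the defining inequality of uniform convexity applied to normalized vectors.

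Next I would discard finitely many terms so that $x_n, y_n \neq 0$ (possible since $\|x_n\|, \|y_n\| \to 1$), and set $u_n = x_n/\|x_n\|$, $v_n = y_n/\|y_n\|$. The key routine estimates are $\|u_n - x_n\| = |1 - \|x_n\||\to 0$ and likewise for $v_n - y_n$; consequently $\|u_n - v_n\| \geq \varepsilon/2$ for all sufficiently large $n$, and moreover
\begin{equation*}
\bigl|\,\|u_n + v_n\| - \|x_n + y_n\|\,\bigr| \leq \|u_n - x_n\| + \|v_n - y_n\| \longrightarrow 0,
\end{equation*}
so that $\|u_n + v_n\| \to 2$ by the hypothesis on $\|x_n + y_n\|$.

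Now I would invoke uniform convexity with $\varepsilon/2$ in place of $\varepsilon$: there exists $\delta > 0$ such that any unit vectors $u, v$ with $\|u - v\| \geq \varepsilon/2$ satisfy $\|u + v\| \leq 2 - \delta$. Since $u_n, v_n$ are unit vectors with $\|u_n - v_n\| \geq \varepsilon/2$ eventually, we get $\|u_n + v_n\| \leq 2 - \delta$ eventually, contradicting $\|u_n + v_n\| \to 2$. This forces $\|x_n - y_n\| \to 0$.

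I do not foresee any serious obstacle: the argument is standard once one realizes that uniform convexity is only stated for unit vectors and hence a normalization step is needed. The most delicate point is verifying carefully that the normalization perturbation does not destroy either the lower bound on $\|u_n - v_n\|$ or the limit $\|u_n + v_n\| \to 2$; both follow from the elementary triangle inequalities displayed above, together with $\|x_n\|, \|y_n\| \to 1$.
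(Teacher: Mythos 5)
Your proof is correct: the normalization $u_n = x_n/\|x_n\|$, $v_n = y_n/\|y_n\|$ perturbs both $\|u_n - v_n\|$ and $\|u_n + v_n\|$ by at most $|1-\|x_n\|| + |1-\|y_n\|| \to 0$, so the contradiction with the definition of uniform convexity (applied with $\varepsilon/2$) goes through. The paper itself does not prove this lemma but merely cites it from Goebel and Kirk, and your argument is exactly the standard one used there, so there is nothing further to reconcile.
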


The following lemma was proved in \cite{khansuz} utilizing Lemma \ref{KIRK}.
We shall use it in the proof of our main theorem.

\begin{lemma}
\label{KS}\cite{khansuz} Let $E$ be a uniformly convex Banach space and let $%
\{u_{n}\},\{v_{n}\}$ and $\{w_{n}\}$ be sequences in $E$. Let $d$ and $t$ be
real numbers with $d\in (0,\infty )$ and $t\in (0,1)$. Assume that $%
\lim_{n\rightarrow \infty }\left\Vert u_{n}-v_{n}\right\Vert =d$, $%
\limsup_{n\rightarrow \infty }\left\Vert u_{n}-w_{n}\right\Vert \leq (1-t)d$
and $\limsup_{n\rightarrow \infty }\left\Vert v_{n}-w_{n}\right\Vert \leq $ $%
td$. Then%
\begin{equation*}
\lim_{n\rightarrow \infty }\left\Vert tu_{n}+\left( 1-t\right)
v_{n}-w_{n}\right\Vert =0.
\end{equation*}
\end{lemma}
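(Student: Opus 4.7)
The plan is to reduce to a setup where Lemma \ref{KIRK} applies directly. First I would upgrade both $\limsup$ hypotheses to genuine limits. From the triangle inequality
\begin{equation*}
\|u_n - v_n\| \le \|u_n - w_n\| + \|v_n - w_n\|,
\end{equation*}
the left-hand side converges to $d$ while the right-hand side has $\limsup \le (1-t)d + td = d$, so the sum $\|u_n - w_n\| + \|v_n - w_n\|$ in fact converges to $d$. Writing $\|u_n - w_n\|$ as that sum minus $\|v_n - w_n\|$ and taking $\liminf$ gives $\liminf \|u_n - w_n\| \ge d - td = (1-t)d$, and combining with the hypothesis yields $\lim \|u_n - w_n\| = (1-t)d$; by symmetry $\lim \|v_n - w_n\| = td$.

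Next I would rescale by setting
\begin{equation*}
A_n = \frac{u_n - w_n}{1-t}, \qquad B_n = \frac{w_n - v_n}{t},
\end{equation*}
so that $\|A_n\| \to d$, $\|B_n\| \to d$, and the identity $(1-t)A_n + tB_n = u_n - v_n$ gives $\|(1-t)A_n + tB_n\| \to d$. A direct calculation shows
\begin{equation*}
t u_n + (1-t) v_n - w_n = t(1-t)(A_n - B_n),
\end{equation*}
so it suffices to prove $A_n - B_n \to 0$. After normalizing by $d$, this conclusion follows from Lemma \ref{KIRK} as soon as we know $\|A_n + B_n\| \to 2d$.

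To establish $\|A_n + B_n\| \to 2d$, let $M_n = \tfrac{1}{2}(A_n + B_n)$ and use the convex-combination identity
\begin{equation*}
(1-t) A_n + t B_n = (1-2t) A_n + 2t M_n,
\end{equation*}
valid when $t \le 1/2$, together with the mirror identity $(1-t) A_n + t B_n = 2(1-t) M_n + (2t-1) B_n$ when $t \ge 1/2$. Taking norms, using $\|A_n\|, \|B_n\| \to d$, and passing to $\liminf$ forces $\liminf \|M_n\| \ge d$, while the triangle inequality gives $\limsup \|M_n\| \le d$. Hence $\|A_n + B_n\| \to 2d$, as required.

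The main obstacle, in my view, is that Lemma \ref{KIRK} is formulated only for the midpoint of two sequences, whereas the norm information naturally at hand concerns the off-center convex combination $(1-t)A_n + tB_n$. The identity above is precisely the bridge that transports the hypothesis to the midpoint, and it is essentially the only place where uniform convexity of $E$ is invoked.
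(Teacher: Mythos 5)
Your argument is correct: upgrading the two limsup hypotheses to genuine limits, passing to the normalized sequences $A_n=(u_n-w_n)/(1-t)$ and $B_n=(w_n-v_n)/t$, establishing $\left\Vert A_n+B_n\right\Vert \rightarrow 2d$ via the midpoint identities, and concluding with Lemma \ref{KIRK} all check out. The paper does not reproduce a proof of this lemma (it is quoted from \cite{khansuz}, whose proof is described precisely as utilizing Lemma \ref{KIRK}), and your route is essentially that same one, so no further comparison is needed.
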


A Banach space $E$ is said to have the Kadec--Klee property if, for every
sequence $\left\{ x_{n}\right\} $ in $E$ which converges weakly to a point $%
x\in E$ with $\left\Vert x_{n}\right\Vert $ converging to $\left\Vert
x\right\Vert $, $\left\{ x_{n}\right\} $ converges strongly to $x.$ It is
known that uniformly convex Banach spaces have Kadec-Klee property \cite{aos}%
.

\begin{lemma}
\label{FK}\cite{fkr, kac} Let $E$ be a reflexive Banach space whose dual has
the Kadec--Klee property. Let $\left\{ x_{n}\right\} $ be a bounded sequence
in $E$ and let $y,z\in E$ be weak subsequential limits of $\left\{
x_{n}\right\} .$ Assume that for every $t\in \lbrack 0,1],$ $%
\lim_{n\rightarrow \infty }\left\Vert tx_{n}+\left( 1-t\right)
p-q\right\Vert $exists. Then $y=z$.
\end{lemma}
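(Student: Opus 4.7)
I would argue by contradiction. Suppose $y\neq z$ and set $d:=\|y-z\|>0$. From the hypothesis that $y$ and $z$ are weak subsequential limits of $\{x_n\}$, extract subsequences with $x_{n_k}\rightharpoonup y$ and $x_{m_k}\rightharpoonup z$. For $t\in[0,1]$ define
\[
h(t):=\lim_{n\to\infty}\|tx_n+(1-t)y-z\|,
\]
which exists by hypothesis; as a pointwise limit of convex functions of $t$, $h$ is convex on $[0,1]$ with $h(0)=d$.

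The first, soft step is to compute the weak limits of $u_n(t):=tx_n+(1-t)y-z$ along each subsequence: $u_{n_k}(t)\rightharpoonup y-z$ while $u_{m_k}(t)\rightharpoonup (1-t)(y-z)$. Combining weak lower semicontinuity of the norm with the existence of $h(t)$ yields $h(t)\ge d$ and $h(t)\ge(1-t)d$ for every $t\in[0,1]$. In particular $h$ achieves its minimum on $[0,1]$ at $t=0$, and convexity then forces $h$ to be nondecreasing.

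The substantive second step is to exploit the Kadec--Klee property of $E^{*}$, which is reflexive because $E$ is. I would select for each $n$ a norming functional $f_n\in E^{*}$ attached to $x_n-z$ (so $\|f_n\|=1$ and $f_n(x_n-z)=\|x_n-z\|$), pass along subsequences to weak limits $f_{n_k}\rightharpoonup f$ and $f_{m_k}\rightharpoonup g$ in $E^{*}$, and use the precise shape of $h$ on $[0,1]$ to force $\|f_{n_k}\|\to\|f\|$. The Kadec--Klee property then upgrades this to strong convergence $f_{n_k}\to f$, and combining the strong limit of the functionals with the weak limits $x_{n_k}\rightharpoonup y$ and $x_{m_k}\rightharpoonup z$ collapses the two evaluations on $y-z$ and $z-z$, producing the contradiction $d=0$.

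The main obstacle is precisely this last passage through the duality mapping. Because the hypothesis controls only the one-parameter family $\lim\|tx_n+(1-t)y-z\|$ rather than $\lim\|x_n-u\|$ for all $u\in E$, extracting the norm convergence $\|f_{n_k}\|\to\|f\|$ demanded by Kadec--Klee requires a careful use of the convexity and monotonicity of $h$ established in the first step. This is exactly where the full force of reflexivity together with the Kadec--Klee property of $E^{*}$ enters the argument and substitutes for an Opial-type hypothesis.
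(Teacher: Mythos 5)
The paper does not prove this lemma at all: it is imported verbatim from the cited references (Garc\'{\i}a-Falset--Kaczor--Kuczumow--Reich and Kaczor), so there is no internal proof to compare with, and your proposal must stand on its own. Judged that way, it is an outline whose decisive step is missing. Your first step is fine but nearly vacuous: weak lower semicontinuity along the two subsequences gives $h(t)\geq d$ and $h(t)\geq(1-t)d$, with $h(0)=d$, and convexity makes $h$ nondecreasing; this is very little information, and nothing in the rest of the plan extracts more from it.

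The genuine gap is the second step, which you yourself flag as ``the main obstacle'' without resolving it. To apply the Kadec--Klee property of $E^{*}$ to the norming functionals $f_{n}$ of $x_{n}-z$ you need two things: weak (weak-$*$) convergence of a subsequence of $\{f_{n}\}$, which reflexivity does give, and convergence of the norms to the norm of the limit, i.e.\ $\Vert f\Vert=1$ since $\Vert f_{n}\Vert\equiv 1$. There is no general reason the weak limit of norming functionals has norm one, and you give no argument that the ``shape of $h$'' (convex, nondecreasing, $\geq d$) forces it; moreover, the functionals you chose are attached to $x_{n}-z$ only, i.e.\ to the single value $t=1$, so the hypothesis that the limit exists for \emph{every} $t\in[0,1]$ never actually enters your argument. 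The known proofs work differently at exactly this point: they use the support functionals at the segment points $t_{0}x_{n}+(1-t_{0})y-z$ for a $t_{0}\in(0,1)$ where the convex limit function $h$ is differentiable, and invoke the standard fact that derivatives of convex functions $g_{n}(t)=\Vert tx_{n}+(1-t)y-z\Vert$ converge to $h'(t_{0})$ at such points; it is this derivative convergence that pins down the norms of the weak limits of the functionals and lets Kadec--Klee upgrade to strong convergence, after which evaluating along the two subsequences gives $y=z$. Without some substitute for that mechanism, your plan does not close, so as it stands the proof is incomplete at its central step.
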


\begin{proposition}
\label{K} Let $T$ be a mapping on a subset $K$ of a Banach space $E$ and
satisfy (RSC)-condition. Then

\begin{description}
\item[(i)] $\left\Vert x-Ty\right\Vert \leq 7\left\Vert x-Tx\right\Vert
+\left\Vert x-y\right\Vert ,$

\item[(ii)] $\left\Vert y-Ty\right\Vert \leq 7\left\Vert x-Tx\right\Vert
+2\left\Vert x-y\right\Vert $ \noindent
\end{description}

\noindent hold for all $x,y\in K.$
\end{proposition}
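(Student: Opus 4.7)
The plan is to split into the two cases determined by the (RSC) hypothesis on $(x,y)$ and invoke (RSC) either directly or via the intermediate point $Tx$.

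\textbf{Case 1:} $\tfrac{1}{2}\|x-Tx\|\le \|x-y\|$. Here (RSC) applies directly to $(x,y)$, giving
\[
\|Tx-Ty\|\le \tfrac{1}{3}\bigl(\|x-y\|+\|y-Ty\|+\|x-Tx\|\bigr).
\]
I would then bound $\|y-Ty\|$ by $\|y-x\|+\|x-Tx\|+\|Tx-Ty\|$, substitute, and solve for $\|Tx-Ty\|$; the $\tfrac{1}{3}$ coefficient on $\|Tx-Ty\|$ absorbs cleanly, yielding a bound of $\|Tx-Ty\|\le \|x-Tx\|+\|x-y\|$. The triangle inequality then gives (i) and (ii) with constants well below $7$ and $2$, respectively.

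\textbf{Case 2:} $\|x-y\|<\tfrac{1}{2}\|x-Tx\|$. Now (RSC) does not apply to $(x,y)$, so I would go through $Tx$ as an intermediary, mimicking Suzuki's two-step trick. First, since $\tfrac{1}{2}\|x-Tx\|\le \|x-Tx\|$, (RSC) applied to $(x,Tx)$ gives
\[
\|Tx-T(Tx)\|\le \tfrac{1}{3}\bigl(2\|x-Tx\|+\|Tx-T(Tx)\|\bigr),
\]
which rearranges to $\|Tx-T(Tx)\|\le \|x-Tx\|$. Second, the reverse triangle inequality together with the Case 2 hypothesis yields
\[
\|Tx-y\|\ge \|Tx-x\|-\|x-y\|>\tfrac{1}{2}\|x-Tx\|\ge \tfrac{1}{2}\|Tx-T(Tx)\|,
\]
so (RSC) applies to the pair $(Tx,y)$:
\[
\|T(Tx)-Ty\|\le \tfrac{1}{3}\bigl(\|Tx-y\|+\|y-Ty\|+\|Tx-T(Tx)\|\bigr).
\]

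With both (RSC) applications in hand, I would finish by chaining triangle inequalities. For (i): $\|x-Ty\|\le \|x-Tx\|+\|Tx-T(Tx)\|+\|T(Tx)-Ty\|$; expanding using $\|Tx-y\|\le \|x-Tx\|+\|x-y\|$ and $\|Tx-T(Tx)\|\le \|x-Tx\|$ gives an estimate of the form $\alpha\|x-Tx\|+\beta\|x-y\|+\tfrac{1}{3}\|y-Ty\|$. For (ii): bound $\|y-Ty\|\le \|y-x\|+\|x-Ty\|$ and combine with the previous estimate; the $\tfrac{1}{3}\|y-Ty\|$ can be absorbed to the left, producing a bound on $\|y-Ty\|$, which in turn closes the estimate for $\|x-Ty\|$. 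All constants that emerge are within the stated bounds of $7$ and $7,2$.

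The main obstacle is the second case: one has to notice that even though (RSC) is unavailable on $(x,y)$, the pair $(Tx,y)$ \emph{does} satisfy the hypothesis precisely because $y$ is closer to $x$ than $Tx$ is, and that one first needs the auxiliary estimate $\|Tx-T(Tx)\|\le \|x-Tx\|$ to push the hypothesis through. After that, the computation is a careful but routine bookkeeping exercise in combining triangle inequalities and solving the resulting linear inequalities in the unknowns $\|Tx-Ty\|$, $\|y-Ty\|$, and $\|x-Ty\|$.
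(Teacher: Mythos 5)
Your proposal is correct, but it takes a different route from the paper: the paper does not prove (i) at all — it simply cites Karapinar's paper for (i) and then obtains (ii) in one line from (i) via the triangle inequality $\left\Vert y-Ty\right\Vert \leq \left\Vert y-x\right\Vert +\left\Vert x-Ty\right\Vert$. You instead give a self-contained derivation from the (RSC)-condition itself, in the spirit of Suzuki's original lemma for condition (C): in the case $\tfrac{1}{2}\left\Vert x-Tx\right\Vert \leq \left\Vert x-y\right\Vert$ you apply (RSC) to $(x,y)$ and absorb the $\left\Vert y-Ty\right\Vert$ term to get $\left\Vert Tx-Ty\right\Vert \leq \left\Vert x-Tx\right\Vert +\left\Vert x-y\right\Vert$; in the complementary case you first establish $\left\Vert Tx-T(Tx)\right\Vert \leq \left\Vert x-Tx\right\Vert$ and verify that the pair $(Tx,y)$ satisfies the (RSC) hypothesis, then chain triangle inequalities and solve the resulting linear system — I checked the bookkeeping and it closes, giving $\left\Vert x-Ty\right\Vert \leq 4\left\Vert x-Tx\right\Vert +\left\Vert x-y\right\Vert$ and $\left\Vert y-Ty\right\Vert \leq 4\left\Vert x-Tx\right\Vert +2\left\Vert x-y\right\Vert$, comfortably within the stated constants $7$ and $(7,2)$. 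What your approach buys is independence from the cited source (and in fact sharper constants than the statement requires); what the paper's approach buys is brevity, since the only new content it needs to supply is the one-line deduction of (ii) from (i), which your argument reproduces implicitly when you absorb the $\tfrac{1}{3}\left\Vert y-Ty\right\Vert$ term. One small wording slip: in Case 1 the coefficient of $\left\Vert x-y\right\Vert$ in (ii) comes out exactly $2$, not ``well below'' it, but that is still exactly the bound claimed in the proposition, so nothing is lost.
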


\begin{proof}
(i) is proved in \cite{karapinar}. It follows from (i) that%
\begin{equation*}
\left\Vert y-Ty\right\Vert \leq \left\Vert y-x\right\Vert +\left\Vert
x-Ty\right\Vert \leq 7\left\Vert x-Tx\right\Vert +2\left\Vert x-y\right\Vert
.
\end{equation*}%
This completes the proof.
\end{proof}

\section{Main Results}

In this section, we give a weak and a strong convergence theorem. First, we
prove a couple of lemmas and a proposition which are useful for our weak
convergence theorem. The following lemma is an extension of Lemma 8 of \cite%
{khansuz} to the case of mappings satisfying (RSC)-condition.

\begin{lemma}
\label{1} Let $T$ be a mapping on a bounded and convex subset $K$ of a
uniformly convex Banach space $E.$ Assume that $T$ satisfies $(RSC)$%
-condition. Then for any $\varepsilon >0,$ there exists $\xi \left(
\varepsilon \right) >0$ such that for any $t\in \lbrack 0,1]$ and for any $%
u,v\in K$ with $\left\Vert Tu-u\right\Vert <\xi \left( \varepsilon \right) ,$
$\left\Vert Tv-v\right\Vert <\xi \left( \varepsilon \right) ,$ we have 
\begin{equation*}
\left\Vert T(tu+(1-t)v)-(tu+(1-t)v)\right\Vert <\varepsilon .
\end{equation*}
\end{lemma}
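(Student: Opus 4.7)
The plan is to argue by contradiction, converting the statement into a sequential formulation and then exploiting the quantitative estimates from Proposition \ref{K} together with the uniform-convexity consequence in Lemma \ref{KS}.

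Concretely, I would suppose the conclusion fails: there exists $\varepsilon_{0}>0$ together with sequences $\{u_{n}\},\{v_{n}\}\subset K$ and $\{t_{n}\}\subset [0,1]$ such that $\Vert Tu_{n}-u_{n}\Vert <1/n$ and $\Vert Tv_{n}-v_{n}\Vert <1/n$, yet $\Vert Tz_{n}-z_{n}\Vert \geq \varepsilon _{0}$ where $z_{n}:=t_{n}u_{n}+(1-t_{n})v_{n}$. Since $K$ is bounded, after passing to a subsequence I may assume $t_{n}\to t\in [0,1]$ and $d_{n}:=\Vert u_{n}-v_{n}\Vert \to d\in [0,\infty )$.

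The heart of the plan is then to apply Proposition \ref{K}(i) to the pairs $(u_{n},z_{n})$ and $(v_{n},z_{n})$ in order to bound the distances from $u_{n}$ and $v_{n}$ to $Tz_{n}$. Using $\Vert u_{n}-z_{n}\Vert =(1-t_{n})d_{n}$ and $\Vert v_{n}-z_{n}\Vert =t_{n}d_{n}$, this gives
$$\Vert u_{n}-Tz_{n}\Vert \leq 7/n+(1-t_{n})d_{n},\qquad \Vert v_{n}-Tz_{n}\Vert \leq 7/n+t_{n}d_{n},$$
so that $\limsup \Vert u_{n}-Tz_{n}\Vert \leq (1-t)d$ and $\limsup \Vert v_{n}-Tz_{n}\Vert \leq td$. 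In the generic case $d>0$ and $t\in (0,1)$, Lemma \ref{KS} (applied with $w_{n}:=Tz_{n}$) then yields $\Vert tu_{n}+(1-t)v_{n}-Tz_{n}\Vert \to 0$. Since $\Vert z_{n}-[tu_{n}+(1-t)v_{n}]\Vert =|t_{n}-t|\,d_{n}\to 0$, the triangle inequality forces $\Vert Tz_{n}-z_{n}\Vert \to 0$, contradicting $\Vert Tz_{n}-z_{n}\Vert \geq \varepsilon _{0}$.

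The main obstacle, and the reason the argument is not simply a direct appeal to Lemma \ref{KS}, is that the latter is stated only for strictly $d>0$ and $t\in (0,1)$, so the boundary cases must be handled separately. For those I would fall back on Proposition \ref{K}(ii), which applied to $(u_{n},z_{n})$ and to $(v_{n},z_{n})$ yields
$$\Vert z_{n}-Tz_{n}\Vert \leq 7/n+2(1-t_{n})d_{n}\quad \text{and}\quad \Vert z_{n}-Tz_{n}\Vert \leq 7/n+2t_{n}d_{n}.$$
When $d=0$, or $t=0$, or $t=1$, at least one of these right-hand sides tends to $0$, producing the same contradiction and closing the argument.
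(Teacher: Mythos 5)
Your argument is correct. It follows the same overall skeleton as the paper's proof --- negate the statement to get sequences $\{u_n\},\{v_n\},\{t_n\}$ with $\Vert Tu_n-u_n\Vert,\Vert Tv_n-v_n\Vert<1/n$ but $\Vert Tz_n-z_n\Vert\geq\varepsilon_0$, pass to convergent subsequences, show $\limsup\Vert u_n-Tz_n\Vert\leq(1-t)d$ and $\limsup\Vert v_n-Tz_n\Vert\leq td$, and invoke Lemma \ref{KS} --- but the way you obtain the two key estimates is genuinely simpler. You apply Proposition \ref{K}(i), which holds unconditionally for all pairs of points, directly to $(u_n,z_n)$ and $(v_n,z_n)$. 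The paper instead invokes the (RSC)-condition itself, whose hypothesis $\tfrac12\Vert Tu_n-u_n\Vert\leq\Vert u_n-z_n\Vert$ must first be verified; this forces the authors to prove in advance that $\liminf\Vert u_n-z_n\Vert>0$, $\liminf\Vert v_n-z_n\Vert>0$, $\liminf t_n>0$ and $\limsup t_n<1$ (thereby ruling out the degenerate limits $d=0$ and $t\in\{0,1\}$ before Lemma \ref{KS} is ever applied), and then to combine the (RSC)-inequality with Proposition \ref{K}(ii) to eliminate the $\Vert z_n-Tz_n\Vert$ term. Your route dispenses with all of that preparatory work at the cost of a short explicit case analysis for $d=0$, $t=0$, $t=1$ via Proposition \ref{K}(ii), which you carry out correctly (in each boundary case one of the two bounds $7/n+2(1-t_n)d_n$, $7/n+2t_n d_n$ tends to $0$). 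You also make explicit the step $\Vert z_n-(tu_n+(1-t)v_n)\Vert=|t_n-t|\,d_n\to0$ needed to pass from the conclusion of Lemma \ref{KS} to $\Vert Tz_n-z_n\Vert\to0$, a point the paper glosses over. Net effect: same strategy, cleaner execution.
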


\begin{proof}
Assume on contrary that there exist sequences $\left\{ u_{n}\right\}
,\left\{ v_{n}\right\} \subset K,$ $\left\{ t_{n}\right\} \subset \left[ 0,1%
\right] $ and $\varepsilon >0$ such that $\left\Vert Tu_{n}-u_{n}\right\Vert
<1/n,$ $\left\Vert Tv_{n}-v_{n}\right\Vert <1/n$ \ and 
\begin{equation*}
\left\Vert
T(t_{n}u_{n}+(1-t_{n})v_{n})-(t_{n}u_{n}+(1-t_{n})v_{n})\right\Vert \geq
\varepsilon .
\end{equation*}%
Set $x_{n}=t_{n}u_{n}+(1-t_{n})v_{n}$ and $w_{n}=Tx_{n}.$ From Proposition %
\ref{K} (ii), we get%
\begin{eqnarray*}
0 &<&\varepsilon \leq \liminf_{n\rightarrow \infty }\left\Vert
Tx_{n}-x_{n}\right\Vert \\
&\leq &\liminf_{n\rightarrow \infty }\left( 7\left\Vert
Tu_{n}-u_{n}\right\Vert +2\left\Vert u_{n}-x_{n}\right\Vert \right) \\
&=&2\liminf_{n\rightarrow \infty }\left\Vert u_{n}-x_{n}\right\Vert .
\end{eqnarray*}%
Similarly, we can show that 
\begin{equation*}
0<\liminf_{n\rightarrow \infty }\left\Vert v_{n}-x_{n}\right\Vert
\end{equation*}%
and hence 
\begin{equation*}
0<\liminf_{n\rightarrow \infty }\left\Vert u_{n}-v_{n}\right\Vert .
\end{equation*}%
Since $K$ is bounded and%
\begin{equation*}
0<\liminf_{n\rightarrow \infty }\left\Vert v_{n}-x_{n}\right\Vert
=\liminf_{n\rightarrow \infty }t_{n}\left\Vert u_{n}-v_{n}\right\Vert \leq
\liminf_{n\rightarrow \infty }t_{n}\times \sup_{n\in 
\mathbb{N}
}\left\Vert u_{n}-v_{n}\right\Vert ,
\end{equation*}%
we get $0<\liminf_{n\rightarrow \infty }t_{n}.$ Similarly, we can show that $%
\limsup_{n\rightarrow \infty }t_{n}<1.$ So, without loss of generality, we
may assume that $\left\Vert u_{n}-v_{n}\right\Vert \rightarrow d\in \left(
0,\infty \right) $ and $t_{n}\rightarrow t\in \left( 0,1\right) $ as $%
n\rightarrow \infty .$ From $\lim_{n\rightarrow \infty }\left\Vert
Tu_{n}-u_{n}\right\Vert =0$ and $0<\liminf_{n\rightarrow \infty }\left\Vert
u_{n}-x_{n}\right\Vert ,$ for sufficiently large $n\in 
\mathbb{N}
,$ we obtain%
\begin{equation*}
\frac{1}{2}\left\Vert Tu_{n}-u_{n}\right\Vert \leq \left\Vert
u_{n}-x_{n}\right\Vert .
\end{equation*}%
Since $T$ satisfies (RSC)-condition, we have%
\begin{equation*}
\left\Vert Tu_{n}-Tx_{n}\right\Vert \leq \frac{1}{3}\left( \left\Vert
u_{n}-x_{n}\right\Vert +\left\Vert x_{n}-Tx_{n}\right\Vert +\left\Vert
u_{n}-Tu_{n}\right\Vert \right) .
\end{equation*}%
Similarly, we can show that%
\begin{equation*}
\left\Vert Tv_{n}-Tx_{n}\right\Vert \leq \frac{1}{3}\left( \left\Vert
v_{n}-x_{n}\right\Vert +\left\Vert x_{n}-Tx_{n}\right\Vert +\left\Vert
v_{n}-Tv_{n}\right\Vert \right)
\end{equation*}%
for sufficiently large $n\in 
\mathbb{N}
.$ Now using Propositon \ref{K} (ii) and the definiton of (RSC)-condition,
we get%
\begin{eqnarray*}
\limsup_{n\rightarrow \infty }\left\Vert u_{n}-w_{n}\right\Vert &\leq
&\limsup_{n\rightarrow \infty }\left( \left\Vert u_{n}-Tu_{n}\right\Vert
+\left\Vert Tu_{n}-Tx_{n}\right\Vert \right) \\
&\leq &\limsup_{n\rightarrow \infty }\left( 
\begin{array}{c}
\left\Vert u_{n}-Tu_{n}\right\Vert +\frac{1}{3}\left( \left\Vert
u_{n}-x_{n}\right\Vert \right. \\ 
\left. +\left\Vert x_{n}-Tx_{n}\right\Vert +\left\Vert
u_{n}-Tu_{n}\right\Vert \right)%
\end{array}%
\right) \\
&\leq &\limsup_{n\rightarrow \infty }\left( 
\begin{array}{c}
\left\Vert u_{n}-Tu_{n}\right\Vert +\frac{1}{3}\left( \left\Vert
u_{n}-x_{n}\right\Vert \right. \\ 
\left. +8\left\Vert u_{n}-Tu_{n}\right\Vert +2\left\Vert
u_{n}-x_{n}\right\Vert \right)%
\end{array}%
\right) \\
&=&\left( 1-t\right) d
\end{eqnarray*}%
and%
\begin{eqnarray*}
\limsup_{n\rightarrow \infty }\left\Vert v_{n}-w_{n}\right\Vert &\leq
&\limsup_{n\rightarrow \infty }\left( \left\Vert v_{n}-Tv_{n}\right\Vert
+\left\Vert Tv_{n}-Tx_{n}\right\Vert \right) \\
&\leq &\limsup_{n\rightarrow \infty }\left( 
\begin{array}{c}
\left\Vert v_{n}-Tv_{n}\right\Vert +\frac{1}{3}\left( \left\Vert
v_{n}-x_{n}\right\Vert \right. \\ 
\left. +\left\Vert x_{n}-Tx_{n}\right\Vert +\left\Vert
v_{n}-Tv_{n}\right\Vert \right)%
\end{array}%
\right) \\
&\leq &\limsup_{n\rightarrow \infty }\left( 
\begin{array}{c}
\left\Vert v_{n}-Tv_{n}\right\Vert +\frac{1}{3}\left( \left\Vert
v_{n}-x_{n}\right\Vert \right. \\ 
\left. +8\left\Vert v_{n}-Tv_{n}\right\Vert +2\left\Vert
v_{n}-x_{n}\right\Vert \right)%
\end{array}%
\right) \\
&=&td.
\end{eqnarray*}%
It then follows from Lemma \ref{KS} that%
\begin{equation*}
0<\varepsilon \leq \lim_{n\rightarrow \infty }\left\Vert
x_{n}-w_{n}\right\Vert =0,
\end{equation*}%
which is a contradiction. This completes the proof.
\end{proof}

\begin{proposition}
\label{2}Let $T$ be a mapping on a bounded and convex subset $K$ of a
uniformly convex Banach space $E$. Assume that $T$ satisfies $(RSC)$%
-condition. Then $I-T$ is demiclosed at zero. That is, if $\left\{
x_{n}\right\} $ in $K$ converges weakly to $x_{0}\in K$ and $%
\lim_{n\rightarrow \infty }\left\Vert Tx_{n}-x_{n}\right\Vert =0$ then $%
Tx_{0}=x_{0}$.
\end{proposition}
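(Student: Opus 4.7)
The strategy is to construct, for each $\varepsilon>0$, a point $z\in K$ with $\|z-x_0\|<\varepsilon$ and $\|Tz-z\|<\varepsilon$; then Proposition~\ref{K}(ii) applied with $x=z$ and $y=x_0$ yields
\begin{equation*}
\|x_0-Tx_0\|\le 7\|z-Tz\|+2\|z-x_0\|<9\varepsilon,
\end{equation*}
and letting $\varepsilon\to 0$ gives $Tx_0=x_0$. In particular, the weak convergence hypothesis must be converted, through some strong approximation, into smallness of $I-T$ at $x_0$.

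The candidate $z$ will come from Mazur's theorem. Since $E$ is uniformly convex, hence reflexive, and $x_n\rightharpoonup x_0$, for every $N\in\mathbb{N}$ there is a finite convex combination $z_N=\sum_{i=1}^{k_N}\lambda_i^{(N)} x_{n_i^{(N)}}$ with each $n_i^{(N)}\ge N$ and $\|z_N-x_0\|<\varepsilon$, and $z_N\in K$ by convexity. To arrange $\|Tz_N-z_N\|<\varepsilon$ I extend Lemma~\ref{1} by induction on the number of summands: for each $k\ge 1$ and $\varepsilon>0$ there is $\xi^{(k)}(\varepsilon)>0$ such that whenever $u_1,\dots,u_k\in K$ satisfy $\|Tu_i-u_i\|<\xi^{(k)}(\varepsilon)$ for all $i$ and $(\lambda_1,\dots,\lambda_k)$ is a probability vector, then $\|T(\sum_i\lambda_i u_i)-\sum_i\lambda_i u_i\|<\varepsilon$. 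The induction step regroups a $k$-term combination as a two-term combination of one summand against the renormalized $(k-1)$-term combination of the others, and applies Lemma~\ref{1} with tolerance $\xi^{(k-1)}(\varepsilon)$ supplied by the inductive hypothesis.

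With the extended Lemma~\ref{1} in hand, the quantifiers are chosen in the order: fix $\varepsilon>0$; fix a number $k$ of Mazur summands; set $\delta=\xi^{(k)}(\varepsilon)>0$; using $\|Tx_n-x_n\|\to 0$, choose $N$ large enough that $\|Tx_n-x_n\|<\delta$ for all $n\ge N$; finally, invoke Mazur within the tail $\{x_n:n\ge N\}$ to produce the desired $k$-term combination $z$. The principal obstacle is exactly this interplay between $k_N$ and the required tail smallness $\xi^{(k_N)}(\varepsilon)$: a priori $k_N$ could grow with $N$, so $k$ must be fixed \emph{before} $N$, using that every tail of $\{x_n\}$ still has $x_0$ as its weak limit, so that a $k$-term Mazur approximation is available from any sufficiently late tail for some $k=k(\varepsilon)$ that depends only on $\varepsilon$. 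Once this quantifier juggling is done, the iterated form of Lemma~\ref{1} delivers $\|Tz-z\|<\varepsilon$, and Proposition~\ref{K}(ii) closes the argument.
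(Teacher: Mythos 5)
Your overall architecture (Mazur approximation of $x_0$ by a convex combination of tail points with small displacement, an iterated use of Lemma~\ref{1} to control the displacement of that combination, and Proposition~\ref{K}(ii) to conclude $\left\Vert x_{0}-Tx_{0}\right\Vert <9\varepsilon $) is the same as the paper's. But the step you yourself single out as the principal obstacle is not actually resolved. You assert that for each $\varepsilon >0$ there is a number $k=k(\varepsilon )$ of Mazur summands, independent of the tail, such that every sufficiently late tail of $\left\{ x_{n}\right\} $ contains a $k$-term convex combination within $\varepsilon $ of $x_{0}$, and you justify this only by the remark that every tail still converges weakly to $x_{0}$. That remark gives, via Mazur, a finite convex combination from each tail separately, but with a number of terms that may depend on the tail; nothing in Mazur's theorem bounds the number of terms uniformly as the tail recedes. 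A uniform bound of the kind you need is a genuinely nontrivial quantitative statement (it is in the territory of Banach--Saks type theorems for uniformly convex spaces), and since your thresholds $\xi ^{(k)}(\varepsilon )$ may shrink as $k$ grows, without such a bound the circularity between the choice of $k$ and the choice of the tail is not broken. As written, the proof has a gap exactly there.

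The missing idea, which is how the paper avoids any uniformity in the number of summands, is to index the tolerances by the position of the point rather than by the total count of points in the combination. One fixes a strictly decreasing sequence $\varepsilon _{1}=\varepsilon $, $\varepsilon _{n+1}=\min \left\{ \varepsilon _{n},\xi \left( \varepsilon _{n}\right) \right\} /2$, so that $\varepsilon _{n+1}<\xi \left( \varepsilon _{n}\right) $, and then uses $\left\Vert Tx_{n}-x_{n}\right\Vert \rightarrow 0$ to extract a subsequence $\left\{ x_{f(n)}\right\} $ with $\left\Vert x_{f(n)}-Tx_{f(n)}\right\Vert <\xi \left( \varepsilon _{n}\right) $. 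Writing an arbitrary finite convex combination of $x_{f(1)},\ldots ,x_{f(v)}$ as nested two-term combinations and applying Lemma~\ref{1} from the innermost pair outward, the nesting $\varepsilon _{n+1}<\xi \left( \varepsilon _{n}\right) $ guarantees the displacement of the whole combination is $<\varepsilon $ no matter how many terms it has. Mazur is then applied to the subsequence $\left\{ x_{f(n)}\right\} $, whose weak limit is still $x_{0}$, so no bound on the number of summands is ever required. If you replace your fixed-$k$ quantifier argument by this adaptive choice of the subsequence, the rest of your proof goes through.
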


\begin{proof}
Take a function $\xi :\left( 0,\infty \right) \rightarrow \left( 0,\infty
\right) $ satisfying the conclusion of Lemma \ref{1}. Let $\left\{
x_{n}\right\} $ be a sequence which converges weakly to $x_{0}\in K$ and $%
\lim_{n\rightarrow \infty }\left\Vert Tx_{n}-x_{n}\right\Vert =0.$ Choose $%
\varepsilon >0$ arbitrarily. Define a strictly decreasing sequence $\left\{
\varepsilon _{n}\right\} $ in $(0,\infty )$ by%
\begin{equation*}
\varepsilon _{1}=\varepsilon \text{ and }\varepsilon _{n+1}=\min \left\{
\varepsilon _{n},\xi \left( \varepsilon _{n}\right) \right\} /2.
\end{equation*}%
It is easy to see that $\varepsilon _{n+1}<\xi \left( \varepsilon
_{n}\right) $. Choose a subsequence $\left\{ x_{f\left( n\right) }\right\} $
of $\left\{ x_{n}\right\} $ such that $\left\Vert x_{f\left( n\right)
}-Tx_{f\left( n\right) }\right\Vert <\xi \left( \varepsilon _{n}\right) .$
Note that $x_{0}$ belongs to the closed convex hull of $\left\{ x_{f\left(
n\right) }:n\in 
\mathbb{N}
\right\} $ because it is a weak limit of $\left\{ x_{f\left( n\right)
}\right\} .$ Hence, there exist $y\in K$ and $v\in 
\mathbb{N}
$ such that $\left\Vert y-x_{0}\right\Vert <\varepsilon $ and $y$ belongs to
the convex hull of $\left\{ x_{f\left( n\right) }:n=1,2,\ldots ,v\right\} .$
Using Lemma \ref{1}, we have $\left\Vert Ty-y\right\Vert <\varepsilon .$ Now
Proposition \ref{K} plays it role to assure that%
\begin{equation*}
\left\Vert Tx_{0}-x_{0}\right\Vert \leq 7\left\Vert Ty-y\right\Vert
+2\left\Vert y-x_{0}\right\Vert <9\varepsilon .
\end{equation*}%
Since $\varepsilon >0$ is arbitrary, we obtain $Tx_{0}=x_{0}.$
\end{proof}

\begin{lemma}
\label{3} Let $T$ be a mapping on a bounded and convex subset $K$ of a
uniformly convex Banach space $E$ satisfying $(RSC)$-condition. Let $\left\{
x_{n}\right\} $ be a sequence in $K$ defined by $x_{n+1}=\alpha
Tx_{n}+\left( 1-\alpha \right) x_{n}$, where $x_{1}\in K$ is arbitrariy but
fixed and $\alpha $ is a real number belonging to $[1/2,1)$. Let $p,q\in
F(T) $ and $t\in \left[ 0,1\right] .$ If $\lim_{n\rightarrow \infty
}\left\Vert Tx_{n}-x_{n}\right\Vert =0,$ then $\lim_{n\rightarrow \infty
}\left\Vert tx_{n}+\left( 1-t\right) p-q\right\Vert $ exists.
\end{lemma}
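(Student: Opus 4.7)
The plan is to follow the Khan--Suzuki template \cite{khansuz} for condition (C), replacing the condition-(C) estimates by the (RSC)-estimates from Proposition \ref{K}.

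First, since $T$ satisfies (RSC) and has a fixed point, Proposition \ref{krap6} gives quasi-nonexpansiveness. The convexity inequality
\[
\|x_{n+1}-z\|\le\alpha\|Tx_n-z\|+(1-\alpha)\|x_n-z\|\le\|x_n-z\|
\]
shows that $\{\|x_n-z\|\}$ is non-increasing, hence convergent, for every $z\in F(T)$. In particular $\lim_n\|x_n-p\|$ and $\lim_n\|x_n-q\|$ exist, which settles the boundary cases $t=0$ (the sequence is the constant $\|p-q\|$) and $t=1$ (directly). Writing $d=\lim_n\|x_n-p\|$, the subcase $d=0$ gives $x_n\to p$ strongly, whence $\|tx_n+(1-t)p-q\|\to\|p-q\|$; so henceforth I assume $d>0$ and $t\in(0,1)$.

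For the main case I plan to argue by contradiction. Assume that $\liminf_n\|tx_n+(1-t)p-q\|<\limsup_n\|tx_n+(1-t)p-q\|$, and extract subsequences $\{x_{n_k}\}$, $\{x_{m_k}\}$ (with $n_k<m_k$) realising these two values. Using the iteration $x_{n+1}=\alpha Tx_n+(1-\alpha)x_n$, the asymptotic regularity $\|Tx_n-x_n\|\to0$, and Proposition \ref{K}, I will build three sequences $\{u_k\}$, $\{v_k\}$, $\{w_k\}$ from the two subsequences together with $p$ and $q$, and apply Lemma \ref{KS} to them. The restriction $\alpha\in[1/2,1)$ is used to balance the Mann weight against the parameter $t$ of Lemma \ref{KS}. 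The conclusion of Lemma \ref{KS} will then force the gap between the liminf and the limsup to close, producing the contradiction.

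The main obstacle is the bookkeeping in that last step: the estimates of Proposition \ref{K} carry coefficients $7$ and $2$ (rather than the cleaner $1$ available under condition (C)), so one must verify carefully that the three hypotheses of Lemma \ref{KS},
\[
\lim_k\|u_k-v_k\|=d,\qquad \limsup_k\|u_k-w_k\|\le(1-t)d,\qquad \limsup_k\|v_k-w_k\|\le td,
\]
are actually met. These extra constants will be absorbed using $\|Tx_n-x_n\|\to0$, exactly as in the proof of Lemma \ref{1}; the additional subtlety here is that the auxiliary sequences straddle two different indices $n_k$ and $m_k$ along the Mann iteration rather than lying at a single iterate, so the absorption requires somewhat more careful choices than in \cite{khansuz}.
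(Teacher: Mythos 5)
Your skeleton is the right one --- it is exactly the Khan--Suzuki template that the paper itself follows: quasi-nonexpansiveness of $S x=\alpha Tx+(1-\alpha)x$ gives monotonicity of $\|x_n-z\|$ for $z\in F(T)$, the boundary cases are dispatched as you say, and the heart of the matter is an application of Lemma \ref{KS} to subsequences realising $\liminf$ and $\limsup$ of $h(n)=\|tx_n+(1-t)p-q\|$. But the proposal stops precisely where the lemma becomes nontrivial. The decisive step is the \emph{choice} of the third sequence: the paper takes $u_n=x_{g(n)}$, $v_n=p$ and $w_n=S^{g(n)-f(n)}\bigl(tx_{f(n)}+(1-t)p\bigr)$, i.e.\ it pushes the convex combination forward along the iteration by $g(n)-f(n)$ steps, and it needs the one-step almost-monotonicity inequality (\ref{xst1}), namely $\|x_{n+1}-S^{\ell+1}(tx_m+(1-t)p)\|\le\|x_n-S^{\ell}(tx_m+(1-t)p)\|+\tfrac{8}{3}\|x_n-Tx_n\|$, obtained by applying the (RSC)-estimate to the pair $x_n$, $S^{\ell}(tx_m+(1-t)p)$. "Build three sequences from the two subsequences together with $p$ and $q$" does not identify this construction, and without it there is no route to $\limsup\|u_n-w_n\|\le(1-t)d$. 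Relatedly, your claim that $\alpha\in[1/2,1)$ is "used to balance the Mann weight against the parameter $t$ of Lemma \ref{KS}" is not how the argument works: $\alpha$ cancels in the convexity estimate ($\alpha(\cdot)+(1-\alpha)(\cdot)$ recombines to the full norm), and the hypothesis $\tfrac12\|x_n-Tx_n\|\le\|x_n-S^{\ell}(tx_m+(1-t)p)\|$ needed to invoke (RSC) is secured from the lower bound $\liminf_{m,n}\|x_n-S^{\ell}(tx_m+(1-t)p)\|\ge(1-t)d>0$ combined with $\|Tx_n-x_n\|\to0$, not from $\alpha\ge1/2$.

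The "absorption" issue you flag at the end is also not a bookkeeping detail that resolves itself "exactly as in the proof of Lemma \ref{1}". In Lemma \ref{1} the error terms $7\|Tu_n-u_n\|$ appear once and vanish trivially. Here the one-step inequality must be iterated $g(n)-f(n)$ times, so the telescoped error is $\tfrac{8}{3}\sum_{j=f(n)}^{g(n)-1}\|x_j-Tx_j\|$, a sum over an unbounded number of indices; $\|x_j-Tx_j\|\to0$ alone does not make such a sum tend to zero. Any complete proof has to confront this accumulation (the paper's own write-up passes over it rather quickly), and your proposal neither constructs the quantity in which the accumulation occurs nor offers a mechanism for controlling it. As it stands, the proposal is a correct statement of the strategy with the two load-bearing steps --- the definition of $w_n$ via iterates of $S$ applied to $tx_{f(n)}+(1-t)p$, and the control of the accumulated (RSC) error along those iterates --- left unfilled.
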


\begin{proof}
Since $T$ satisfies (RSC)-condition, it is quasi-nonexpansive by Proposition %
\ref{krap6}. Let $S$ be a mapping from $K$ onto itself defined by $Sx=\alpha
Tx+\left( 1-\alpha \right) x.$ It is not difficult to see that $F\left(
S\right) =F\left( T\right) $ and $S$ is quasi-nonexpansive. Note that $%
x_{n+1}=\alpha Tx_{n}+\left( 1-\alpha \right) x_{n}=Sx_{n}=S^{n}x_{1}.$ Thus
for any $q\in F\left( S\right) ,$ we have 
\begin{eqnarray*}
\left\Vert x_{n+1}-q\right\Vert &\leq &\left\Vert Sx_{n}-q\right\Vert \\
&\leq &\left\Vert x_{n}-q\right\Vert ,
\end{eqnarray*}%
because $S$ is quasi-nonexpansive. Thus we have%
\begin{equation}
\left\Vert x_{n+1}-q\right\Vert \leq \left\Vert x_{n}-q\right\Vert
\label{15}
\end{equation}%
which means that the sequence $\left\{ \left\Vert x_{n}-q\right\Vert
\right\} $ is nonincreasing and hence converges. Also it is obvious that the
sequence $\left\{ \left\Vert p-q\right\Vert \right\} $ converges. Thus it
suffices to consider $t\in \left( 0,1\right) .$ Let $\lim_{n\rightarrow
\infty }\left\Vert x_{n}-p\right\Vert =d.$ If $d=0,$ there is nothing to
prove. Take $d>0.$ From hypothesis, we have $\lim_{n\rightarrow \infty
}\left\Vert Tx_{n}-x_{n}\right\Vert =0.$ We also have\noindent 
\begin{eqnarray*}
&&\liminf_{m,n\rightarrow \infty }\left\Vert x_{n}-S^{\ell }\left(
tx_{m}+\left( 1-t\right) p\right) \right\Vert \\
&\geq &\liminf_{m,n\rightarrow \infty }\left( \left\Vert x_{n}-p\right\Vert
-\left\Vert p-S^{\ell }\left( tx_{m}+\left( 1-t\right) p\right) \right\Vert
\right) \\
&\geq &\liminf_{m,n\rightarrow \infty }\left( \left\Vert x_{n}-p\right\Vert
-\left\Vert p-\left( tx_{m}+\left( 1-t\right) p\right) \right\Vert \right) \\
&=&\left( 1-t\right) d>0
\end{eqnarray*}%
\newline
for all $\ell \in 
\mathbb{N}
\cup \left\{ 0\right\} $, where $S^{0}$ is the identity mapping on $K$.
Hence, there exists $\nu \in 
\mathbb{N}
$ such that%
\begin{equation*}
\frac{1}{2}\left\Vert x_{n}-Tx_{n}\right\Vert \leq \left\Vert x_{n}-S^{\ell
}\left( tx_{m}+\left( 1-t\right) p\right) \right\Vert
\end{equation*}%
for all $\ell \geq 0$ and $m$, $n\geq \nu $. \ Using (RSC)-condition and
Proposition \ref{K} (ii), we obtain%
\begin{eqnarray*}
\!\left\Vert Tx_{n}-T\circ S^{\ell }\left( tx_{m}+\left( 1-t\right) p\right)
\right\Vert &\leq &\frac{1}{3}\left\Vert x_{n}-S^{\ell }\left( tx_{m}+\left(
1-t\right) p\right) \right\Vert \\
&&+\frac{1}{3}\left\Vert 
\begin{array}{c}
S^{\ell }\left( tx_{m}+\left( 1-t\right) p\right) \\ 
-T\circ S^{\ell }\left( tx_{m}+\left( 1-t\right) p\right)%
\end{array}%
\right\Vert \\
&&+\frac{1}{3}\left\Vert x_{n}-Tx_{n}\right\Vert
\end{eqnarray*}%
\noindent This gives%
\begin{eqnarray*}
&&\left\Vert x_{n+1}-S^{\ell +1}\left( tx_{m}+\left( 1-t\right) p\right)
\right\Vert \\
&\leq &\left\Vert Sx_{n}-S\circ S^{\ell }\left( tx_{m}+\left( 1-t\right)
p\right) \right\Vert \\
&\leq &\left\Vert 
\begin{array}{c}
\alpha Tx_{n}+\left( 1-\alpha \right) x_{n} \\ 
-\alpha T\circ S^{\ell }\left( tx_{m}+\left( 1-t\right) p\right) \\ 
-\left( 1-\alpha \right) S^{\ell }\left( tx_{m}+\left( 1-t\right) p\right)%
\end{array}%
\right\Vert \\
&=&\left\Vert 
\begin{array}{c}
\alpha \left( Tx_{n}-T\circ S^{\ell }\left( tx_{m}+\left( 1-t\right)
p\right) \right) \\ 
+\left( 1-\alpha \right) \left( x_{n}-S^{\ell }\left( tx_{m}+\left(
1-t\right) p\right) \right)%
\end{array}%
\right\Vert \\
&\leq &\alpha \left\Vert Tx_{n}-T\circ S^{\ell }\left( tx_{m}+\left(
1-t\right) p\right) \right\Vert \\
&&+\left( 1-\alpha \right) \left\Vert x_{n}-S^{\ell }\left( tx_{m}+\left(
1-t\right) p\right) \right\Vert
\end{eqnarray*}

\begin{eqnarray}
&\leq &\alpha \left\Vert x_{n}-S^{\ell }\left( tx_{m}+\left( 1-t\right)
p\right) \right\Vert  \notag \\
&&+\frac{8}{3}\left\Vert x_{n}-Tx_{n}\right\Vert  \notag \\
&&+\left( 1-\alpha \right) \left\Vert x_{n}-S^{\ell }\left( tx_{m}+\left(
1-t\right) p\right) \right\Vert  \notag \\
&=&\left\Vert x_{n}-S^{\ell }\left( tx_{m}+\left( 1-t\right) p\right)
\right\Vert +\frac{8}{3}\left\Vert x_{n}-Tx_{n}\right\Vert  \label{xst1}
\end{eqnarray}

\noindent for all $\ell \geq 0$ and $m$, $n\geq \nu $. Let $h:%
\mathbb{N}
\rightarrow \left[ 0,\infty \right) $ be a function defined by%
\begin{equation*}
h\left( n\right) =\left\Vert tx_{n}+\left( 1-t\right) p-q\right\Vert .
\end{equation*}%
Take two subsequences $\left\{ f\left( n\right) \right\} $ and $\left\{
g\left( n\right) \right\} $ of $\left\{ n\right\} $ such that $\nu <f\left(
1\right) ,$ $f\left( n\right) <g\left( n\right) $ for each $n\in 
\mathbb{N}
$ and%
\begin{equation*}
\lim_{n\rightarrow \infty }h\left( f\left( n\right) \right)
=\liminf_{n\rightarrow \infty }h\left( n\right) \text{, }\lim_{n\rightarrow
\infty }h\left( g\left( n\right) \right) =\limsup_{n\rightarrow \infty
}h\left( n\right) .
\end{equation*}%
Set $u_{n}=x_{g\left( n\right) }$, $v_{n}=p$ and $w_{n}=S^{g\left( n\right)
-f\left( n\right) }\left( tx_{f\left( n\right) }+\left( 1-t\right) p\right)
. $ Then we get that $\lim_{n\rightarrow \infty }\left\Vert
u_{n}-v_{n}\right\Vert =d,$%
\begin{eqnarray*}
\limsup_{n\rightarrow \infty }\left\Vert u_{n}-w_{n}\right\Vert
&=&\limsup_{n\rightarrow \infty }\left\Vert x_{g\left( n\right) }-S^{g\left(
n\right) -f\left( n\right) }\left( tx_{f\left( n\right) }+\left( 1-t\right)
p\right) \right\Vert \\
&\leq &\limsup_{n\rightarrow \infty }\left\Vert x_{f\left( n\right) }-\left(
tx_{f\left( n\right) }+\left( 1-t\right) p\right) \right\Vert \\
&&+\frac{8}{3}\limsup_{n\rightarrow \infty }\left\Vert
x_{n}-Tx_{n}\right\Vert \text{ \ }(\text{by }(\ref{xst1})) \\
&=&\left( 1-t\right) \limsup_{n\rightarrow \infty }\left\Vert x_{f\left(
n\right) }-p\right\Vert \\
&=&\left( 1-t\right) d
\end{eqnarray*}%
and $\limsup_{n\rightarrow \infty }\left\Vert v_{n}-w_{n}\right\Vert \leq
td. $ From Lemma \ref{KS}, we get that%
\begin{equation*}
\lim_{n\rightarrow \infty }\left\Vert tx_{g\left( n\right) }+\left(
1-t\right) p-S^{g\left( n\right) -f\left( n\right) }\left( tx_{f\left(
n\right) }+\left( 1-t\right) p\right) \right\Vert =0.
\end{equation*}%
Making use of quasi-nonexpansiveness of $S$ together with the above
equation, we get%
\begin{eqnarray*}
\limsup_{n\rightarrow \infty }h\left( n\right) &=&\lim_{n\rightarrow \infty
}h\left( g\left( n\right) \right) \\
&\leq &\limsup_{n\rightarrow \infty }\left( 
\begin{array}{c}
\left\Vert 
\begin{array}{c}
tx_{g\left( n\right) }+\left( 1-t\right) p \\ 
-S^{g\left( n\right) -f\left( n\right) }\left( tx_{f\left( n\right) }+\left(
1-t\right) p\right)%
\end{array}%
\right\Vert \\ 
+\left\Vert S^{g\left( n\right) -f\left( n\right) }\left( tx_{f\left(
n\right) }+\left( 1-t\right) p\right) -q\right\Vert%
\end{array}%
\right) \\
&=&\limsup_{n\rightarrow \infty }\left\Vert S^{g\left( n\right) -f\left(
n\right) }\left( tx_{f\left( n\right) }+\left( 1-t\right) p\right)
-q\right\Vert \\
&\leq &\limsup_{n\rightarrow \infty }\left\Vert \left( tx_{f\left( n\right)
}+\left( 1-t\right) p\right) -q\right\Vert \\
&\leq &\lim_{n\rightarrow \infty }h\left( f\left( n\right) \right) \\
&=&\liminf_{n\rightarrow \infty }h\left( n\right) .
\end{eqnarray*}%
Thus $\lim_{n\rightarrow \infty }h\left( n\right) $ $=\lim_{n\rightarrow
\infty }\left\Vert tx_{n}+\left( 1-t\right) p-q\right\Vert $ exists.
\end{proof}

Now, we can prove the following weak convergence theorem.

\begin{theorem}
\label{4} Let $E$ be a uniformly convex Banach space whose dual has the
Kadec--Klee property. Under the assumptions of \ Lemma\ \ref{3}\ on $T,K$
and $\left\{ x_{n}\right\} ,\left\{ x_{n}\right\} $ converges weakly to a
fixed point of $T$ .
\end{theorem}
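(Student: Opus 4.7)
The strategy is to combine four prepared ingredients: monotonicity of $\|x_n-p\|$ coming from quasi-nonexpansiveness (Proposition~\ref{krap6}); asymptotic regularity $\|Tx_n-x_n\|\to 0$; the existence of auxiliary limits in Lemma~\ref{3}; and demiclosedness of $I-T$ (Proposition~\ref{2}) together with Lemma~\ref{FK} to pin down a unique weak subsequential limit.

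First, fix $p\in F(T)$, nonempty under the hypotheses since $E$ is uniformly convex hence reflexive, $K$ is bounded convex, and (RSC)-mappings admit fixed points in this setting (cf.\ Karapinar). By Proposition~\ref{krap6} and inequality~(\ref{15}) from the proof of Lemma~\ref{3}, $\{\|x_n-p\|\}$ is non-increasing, so $d:=\lim_{n\to\infty}\|x_n-p\|$ exists. The main obstacle of the whole argument now arises: I must show $\|Tx_n-x_n\|\to 0$. From quasi-nonexpansiveness $\|Tx_n-p\|\le\|x_n-p\|\to d$, and the identity $x_{n+1}-p=\alpha(Tx_n-p)+(1-\alpha)(x_n-p)$ together with the established $\|x_{n+1}-p\|\to d$ feeds uniform convexity. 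If $d=0$ we are done; otherwise, normalising by $d$ yields asymptotically unit-norm sequences whose convex combination is also asymptotically unit-norm, and Lemma~\ref{KIRK} (or a Schu-type application of Lemma~\ref{KS} after extracting a subsequence, valid because $\alpha\in[1/2,1)$ is bounded away from $0$ and $1$) forces $\|Tx_n-x_n\|\to 0$.

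With $\|Tx_n-x_n\|\to 0$ secured, Lemma~\ref{3} applies verbatim: for every $p,q\in F(T)$ and every $t\in[0,1]$, $\lim_n\|tx_n+(1-t)p-q\|$ exists. Since $K$ is bounded and $E$ reflexive, $\{x_n\}$ has at least one weak subsequential limit. Let $y,z$ be any two such limits; by Proposition~\ref{2} (demiclosedness of $I-T$ at $0$, applicable because $\|Tx_n-x_n\|\to 0$) both lie in $F(T)$, and Lemma~\ref{FK} applied with $p=y$, $q=z$ and the limits just exhibited gives $y=z$. Hence $\{x_n\}$ has a unique weak cluster point, and being bounded it converges weakly to this point, which lies in $F(T)$.
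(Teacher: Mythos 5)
Your core argument is the same as the paper's: demiclosedness of $I-T$ at zero (Proposition \ref{2}) places every weak subsequential limit in $F(T)$, Lemma \ref{3} supplies the limits $\lim_n\Vert tx_n+(1-t)p-q\Vert$ needed to invoke Lemma \ref{FK} and conclude the set of weak subsequential limits is a singleton, and reflexivity plus boundedness of $K$ then gives weak convergence of the whole sequence. The one genuine difference is that you prove the asymptotic regularity $\Vert Tx_n-x_n\Vert\to 0$, whereas the paper simply carries it as a hypothesis inherited from Lemma \ref{3} ("under the assumptions of Lemma \ref{3}") and uses it without proof. Your extra step is a standard Schu-type argument and is essentially sound, which makes the theorem less conditional; but as sketched it needs two small repairs: quasi-nonexpansiveness only gives $\limsup_n\Vert Tx_n-p\Vert\le d$, so you must also extract $\liminf_n\Vert Tx_n-p\Vert\ge d$ from $\Vert x_{n+1}-p\Vert\le\alpha\Vert Tx_n-p\Vert+(1-\alpha)\Vert x_n-p\Vert$ before normalizing, and Lemma \ref{KIRK} requires $\Vert u_n+v_n\Vert\to 2$ (the midpoint combination), so for $\alpha\in(1/2,1)$ you need the short computation $\Vert\alpha u_n+(1-\alpha)v_n\Vert\le(2\alpha-1)\Vert u_n\Vert+(1-\alpha)\Vert u_n+v_n\Vert$ to pass from the $\alpha$-combination to the sum; the parenthetical appeal to Lemma \ref{KS} does not fit its hypotheses and should be dropped. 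Finally, your appeal to a fixed-point existence theorem of Karapinar is unnecessary (and not established in this generality in the paper): nonemptiness of $F(T)$ is already built into the assumptions of Lemma \ref{3} through the points $p,q\in F(T)$, and quasi-nonexpansiveness then follows from Proposition \ref{krap6} exactly as the paper uses it.
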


\begin{proof}
Let $W$ be the set of all weak subsequential limits of $\left\{
x_{n}\right\} .$ Since $\lim_{n\rightarrow \infty }\left\Vert
Tx_{n}-x_{n}\right\Vert =0,$ by Proposition \ref{2} we have $W\subset
F\left( T\right) .$ That $W$ is singleton now follows by using Lemma \ref{FK}
and Lemma \ref{3}. Since $E$ is reflexive (being uniformly convex), every
subsequence of $\left\{ x_{n}\right\} $ has a subsequence converging weakly
to the unique element of $W$. Therefore $\left\{ x_{n}\right\} $ itself
converges weakly to the unique element of $W$.
\end{proof}

\begin{remark}
The above theorem generalizes Theorem $11$ of Khan and Suzuki \cite{khansuz}.
\end{remark}

The following is also a direct consequence of Theorem \ref{4}.

\begin{corollary}
Let $E$ be a uniformly convex Banach space whose norm is Fr\'{e}chet
differentiable. Let $K,T,$and $\left\{ x_{n}\right\} $ be as in Lemma\ \ref%
{3}. Then, $\left\{ x_{n}\right\} $ converges weakly to a fixed point of $T.$
\end{corollary}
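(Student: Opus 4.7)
The plan is to deduce this corollary directly from Theorem \ref{4}, since the only difference in hypothesis is that Fréchet differentiability of the norm of $E$ replaces the Kadec--Klee property on $E^{*}$. So the entire content of the proof is to verify that the former implies the latter, after which Theorem \ref{4} can be invoked verbatim.

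First I would recall the classical theorem (essentially going back to Šmulian) that for a Banach space $E$, Fréchet differentiability of the norm of $E$ implies that the dual norm on $E^{*}$ has the Kadec--Klee property. This is a standard fact from the geometry of Banach spaces and would simply be cited; no new calculation is needed here. Since $E$ is assumed uniformly convex, it is in particular reflexive, so the hypotheses of Lemma \ref{FK} are satisfied for $E^{*}$ as soon as we have this implication.

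With that implication in hand, every hypothesis of Theorem \ref{4} holds: $E$ is uniformly convex, $E^{*}$ has the Kadec--Klee property, and $K$, $T$, $\{x_{n}\}$ are exactly as in Lemma \ref{3}. Applying Theorem \ref{4} then yields that $\{x_{n}\}$ converges weakly to a fixed point of $T$. There is no genuine obstacle in this argument; the only point requiring care is the correct citation of the geometric fact relating Fréchet differentiability of the primal norm to the Kadec--Klee property of the dual norm, and the observation that uniform convexity supplies the reflexivity that makes Lemma \ref{FK} applicable.
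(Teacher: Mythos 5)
Your proposal is correct and coincides with the paper's (implicit) argument: the paper states this corollary as a direct consequence of Theorem \ref{4}, relying on exactly the classical fact you cite, namely that a reflexive (here uniformly convex) space with a Fr\'{e}chet differentiable norm has a dual with the Kadec--Klee property. The only minor remark is that reflexivity is really needed in the \v{S}mulian-type implication itself (to ensure norm attainment of functionals), not just for Lemma \ref{FK}, but uniform convexity supplies it in either case.
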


Now we prove strong convergence theorem by using Condition (I) of Sentor and
Dotson \cite{Senter}.

Recall that a mapping $T:K\rightarrow K$ where $K$ is a subset of $E$, is
said to satisfy condition (I) \cite{Senter} if there exists a nondecreasing
function $f:[0,\infty )\rightarrow \lbrack 0,\infty )$ with $f(0)=0$, $%
f(r)>0 $ for all $r\in (0,\infty )$ such that $\left\Vert x-Tx\right\Vert
\geq f\left( d\left( x,F\left( T\right) \right) \right) $ for all $x\in K$
where $d\left( x,F\left( T\right) \right) =\inf \left\{ \left\Vert
x-p\right\Vert :p\in F\left( T\right) \right\} .$

\begin{theorem}
Let $E$ be a uniformly convex Banach space. Let $K$, $T$, and $\left\{
x_{n}\right\} $ be as in Lemma\ $\ref{3}$. If $T$ satisfies the condition $%
(I),$ then $\left\{ x_{n}\right\} $ converges strongly to a fixed point of $%
T $ .
\end{theorem}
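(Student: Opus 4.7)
The plan is to exploit Condition (I) to pass from $\lim_{n\to\infty}\|Tx_n - x_n\| = 0$ (built into the ``as in Lemma~\ref{3}'' setup) to the convergence $d(x_n,F(T))\to 0$, and then to use Proposition~\ref{K}(ii) to identify the resulting strong limit of $\{x_n\}$ as a fixed point of $T$. Observe first that Condition (I) tacitly requires $F(T)\neq\emptyset$, so by Proposition~\ref{krap6} the map $T$ is quasi-nonexpansive, and inequality (\ref{15}) in the proof of Lemma~\ref{3} shows that $\{\|x_n - p\|\}$ is nonincreasing for every $p\in F(T)$. Taking the infimum over $p\in F(T)$, the sequence $\{d(x_n,F(T))\}$ is also nonincreasing and hence convergent. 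Feeding the Condition (I) inequality $\|x_n - Tx_n\| \geq f(d(x_n,F(T)))$ together with $\|x_n - Tx_n\|\to 0$ into the properties of $f$ (nondecreasing, $f(0)=0$, $f(r)>0$ for $r>0$), I conclude that $d(x_n,F(T))\to 0$.

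Next I would verify that $\{x_n\}$ is Cauchy. Given $\varepsilon>0$, pick $N$ large enough that $d(x_N,F(T))<\varepsilon/4$ and select $p\in F(T)$ with $\|x_N - p\| < \varepsilon/2$. Using monotonicity of $\|x_k - p\|$ in $k$, for any $m,n\geq N$ the triangle inequality gives $\|x_n - x_m\| \leq \|x_n - p\| + \|x_m - p\| \leq 2\|x_N - p\| < \varepsilon$. Completeness of $E$ then delivers $x_n\to x^*$ strongly, with $x^*\in K$ (reading $K$ as closed, which is the natural interpretation of the hypothesis).

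Finally I would identify the limit as a fixed point. Choose $p_n\in F(T)$ with $\|x_n - p_n\|\leq d(x_n,F(T)) + 1/n$, so that $p_n\to x^*$. Applying Proposition~\ref{K}(ii) with $x=p_n$ (where $Tp_n=p_n$) and $y=x^*$ yields
\[
\|x^* - Tx^*\| \leq 7\|p_n - Tp_n\| + 2\|p_n - x^*\| = 2\|p_n - x^*\| \to 0,
\]
so $Tx^* = x^*$. The main obstacle is really confined to this last step: since RSC-mappings need not be continuous, one cannot conclude $x^*\in F(T)$ from $p_n\to x^*$ with $p_n\in F(T)$ by merely passing to the limit in $Tp_n=p_n$. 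Proposition~\ref{K}(ii) is precisely the substitute for continuity of $T$ that makes the argument go through; everything else is a routine adaptation of the classical Senter--Dotson scheme.
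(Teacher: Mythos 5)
Your proposal is correct, and it diverges from the paper in the concluding step. The first half coincides with the paper's argument: both use inequality (\ref{15}) (quasi-nonexpansiveness via Proposition \ref{krap6}) to make $d(x_n,F(T))$ nonincreasing, and then Condition (I) together with $\lim_n\Vert Tx_n-x_n\Vert=0$ and the properties of $f$ to force $d(x_n,F(T))\to 0$. From there the paper extracts a subsequence $\{x_{n_k}\}$ with fixed points $y_k$ satisfying $\Vert x_{n_k}-y_k\Vert<2^{-k}$, invokes the Tan--Xu argument to show $\{y_k\}$ is Cauchy in $F(T)$, appeals to closedness of $F(T)$ to get a limit $y\in F(T)$, and finally uses the existence of $\lim_n\Vert x_n-y\Vert$ (Lemma \ref{3}) to upgrade subsequential to full strong convergence. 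You instead prove directly that the whole sequence $\{x_n\}$ is Cauchy, using Fej\'er-type monotonicity of $\Vert x_n-p\Vert$ together with $d(x_N,F(T))$ small, and then identify the limit $x^*$ as a fixed point via Proposition \ref{K}(ii) applied to nearby fixed points $p_n\to x^*$; this in effect re-proves the closedness of $F(T)$ that the paper asserts without justification, and it bypasses both the subsequence extraction and the citation of Tan--Xu, making your route more self-contained and arguably cleaner. You are also right to flag that one needs $x^*\in K$ for Proposition \ref{K}(ii) to apply, i.e.\ $K$ should be read as closed; the paper's Lemma \ref{3} omits ``closed'' from its hypotheses, but its own proof needs the same reading (its limit $y$ must lie in $K$ for $F(T)$, as a subset of $K$, to contain it), so this is a defect of the paper's statement rather than of your argument.
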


\begin{proof}
By Lemma \ref{3}, we know that $\lim_{n\rightarrow \infty }\left\Vert
x_{n}-p\right\Vert $ exists for all $p\in F\left( T\right) $. From the
inequality (\ref{15}),%
\begin{equation*}
d\left( x_{n+1},F\left( T\right) \right) \leq d\left( x_{n},F\left( T\right)
\right)
\end{equation*}%
$\ $and so $\lim_{n\rightarrow \infty }d\left( x_{n},F\left( T\right)
\right) $ exists. Assume that $\lim_{n\rightarrow \infty }\left\Vert
x_{n}-p\right\Vert =c$ for some $c\geq 0.$ If $c=0$, it is clear that $%
\left\{ x_{n}\right\} $ converges strongly to $p.$ Suppose $c>0.$ From
hypothesis and the condition (I), we have $\lim_{n\rightarrow \infty
}\left\Vert Tx_{n}-x_{n}\right\Vert =0$ and $f\left( d\left( x_{n},F\left(
T\right) \right) \right) \leq \left\Vert Tx_{n}-x_{n}\right\Vert $. This
gives $\lim_{n\rightarrow \infty }f\left( d\left( x_{n},F\left( T\right)
\right) \right) =0.$ Since $f$ is nondecreasing function, we have $%
\lim_{n\rightarrow \infty }d\left( x_{n},F\left( T\right) \right) =0.$ Thus,
we can take a subsequence $\left\{ x_{n_{k}}\right\} $ of $\left\{
x_{n}\right\} $ and a sequence $\left\{ y_{k}\right\} \subset F\left(
T\right) $ such that $\left\Vert x_{n_{k}}-y_{k}\right\Vert <2^{-k}.$ So, it
follows from method of proof of Tan and Xu \cite{tanxu} that $\left\{
y_{k}\right\} $ is a Cauchy sequence in $F(T)$ and so it converges to a
point $y.$ Since $F\left( T\right) $ is closed, therefore $y\in F\left(
T\right) $ and then $\left\{ x_{n_{k}}\right\} $ converges strongly to $y.$
Since $\lim_{n\rightarrow \infty }\left\Vert x_{n}-p\right\Vert $ exists, we
have that $x_{n}\rightarrow y\in F\left( T\right) .$ This completes the
proof.
\end{proof}


\begin{thebibliography}{99}
\bibitem{diaz} J. B. Diaz, F.T. Metcalf, On the structure of the set of
subsequential limit points of successive approximations, Bull. Amer. Math.
Soc. 73 (1967) 516--519. MR0211387.

\bibitem{reich} S. Reich, Weak convergence theorems for nonexpansive
mappings in Banach spaces, J. Math. Anal. Appl. 67 (1979) 274--276.
MR0528688.

\bibitem{suzuki} T. Suzuki, Fixed point theorems and convergence theorems
for some generalized nonexpansive mappings, J. Math. Anal. Appl. 340 (2008)
1088--1095.

\bibitem{khansuz} S. H. Khan, T. Suzuki, A Reich-type convergence theorem
for generalized nonexpansive mappings in uniformly convex Banach spaces,
Nonlinear Analysis (80) 2013, 211-215.

\bibitem{karapinar} E. Karapinar, Remarks on Suzuki (C)-condition, A.C.J.
Luo et al. (eds.), Dynamical Systems and Methods, Springer Science+Business
Media, LLC 2012. DOI 10.1007/978-1-4614-0454-5\_12.

\bibitem{clark} J. A. Clarkson, Uniformly convex spaces, Trans. Amer. Math.
Soc. (American Mathematical Society) 40 (3): 396--414 (1936).

\bibitem{kirk} K. Goebel, W.A. Kirk, Topics in Metric Fixed Point Theory,
in: Cambridge Studies in Advanced Mathematics, vol. 28, Cambridge University
Press, 1990, MR1074005.

\bibitem{fkr} J. G. Falset, W. Kaczor, T. Kuczumow, S. Reich, Weak
convergence theorems for asymptotically nonexpansive mappings and
semigroups, Nonlinear Anal. 43 (2001) 377--401. MR1796983.

\bibitem{kac} W. Kaczor, Weak convergence of almost orbits of asymptotically
nonexpansive commutative semigroups, J. Math. Anal. Appl. 272 (2002)
565--574. MR1930859.

\bibitem{tanxu} K. K. Tan, H. K. Xu, Approximating fixed points of
nonexpansive mappings by the Ishikawa iteration process, J. Math. Anal.
Appl. 178 (1993) 301--308.

\bibitem{Senter} H. F.\ Senter, W.G. Dotson, Approximatig fixed points of
nonexpansive mappings, Proc. Amer. Math. Soc., 44(2) (1974), 375--380.

\bibitem{aos} R. P. Agarwal, D. O'Regan, D. R. Sahu, Fixed Point Theory for
Lipschitzian-Type Mappings with Applications, Topological Fixed Point Theory
and Its Applications, p. x+368. Springer, New York, NY, USA (2009).
\end{thebibliography}
\end{document}